\numberwithin{equation}{section}
\newtheorem{theorem}{Theorem}[section]
\newtheorem{lemma}[theorem]{Lemma}
\theoremstyle{definition}
\newtheorem{definition}[theorem]{Definition}
\theoremstyle{remark}
\newtheorem{remark}[theorem]{Remark}
\title{Balanced Truncation Model Reduction of a Nonlinear Cable-Mass PDE System with Interior Damping}
\author{Belinda A.~Batten%
	\thanks{School of Mechanical, Industrial, \& Manufacturing Engineering, Oregon State University, Corvallis, OR 97331-6011, USA (bbatten@engr.orst.edu, hesam@servotechinc.com). B.~Batten was supported in part by the Department of Energy under Award Number DE-FG36-08GO18179.  H.~Shoori was supported in part by National Science Foundation grant DMS-1217122.  H.\ Shoori's current address: Caterpillar Technical Center, Chillicothe, IL 61523, USA.}%
	\and
	Hesam Shoori%
	\footnotemark[1]
	%\thanks{H.\ Shoori's current address: Caterpillar Technical Center, Chillicothe, IL 61523, USA.}
	\and
	John~R.~Singler%
	\thanks{Department of Mathematics
		and Statistics, Missouri University of Science and Technology,
		Rolla, MO (\mbox{singlerj@mst.edu}, mhwdrc@mst.edu). J.~Singler was supported in part by National Science Foundation grant DMS-1217122.}
	\and
	Madhuka H.~Weerasinghe%
	\footnotemark[2]
}
\date{}
\begin{document}
\maketitle

\begin{abstract}
  We consider model order reduction of a nonlinear cable-mass system modeled by a 1D wave equation with interior damping and dynamic boundary conditions.  The system is driven by a time dependent forcing input to a linear mass-spring system at one boundary.  The goal of the model reduction is to produce a low order model that produces an accurate approximation to the displacement and velocity of the mass in the nonlinear mass-spring system at the opposite boundary.  We first prove that the linearized and nonlinear unforced systems are well-posed and exponentially stable under certain conditions on the damping parameters, and then consider a balanced truncation method to generate the reduced order model (ROM) of the nonlinear input-output system.  Little is known about model reduction of nonlinear input-output systems, and so we present detailed numerical experiments concerning the performance of the nonlinear ROM.  We find that the ROM is accurate for many different combinations of model parameters.
\end{abstract}
%

%------------------------------------------------------------------------------------------------
\section{Introduction}
%------------------------------------------------------------------------------------------------

Model order reduction (MOR) is currently a very active field of research in many disciplines with many potential applications including numerical simulation, optimization, uncertainty quantification, feedback control, and data assimilation; see, e.g., \cite{BennerSachsVolkwein14, bui2008parametric, daescu2007efficiency, fang2014reduced, gunzburger2016ensemble, gunzburger2005reduced, varshney2009feedback}.  MOR for linear differential equation systems with inputs and outputs is well established; however, little is known about MOR of nonlinear systems with inputs and outputs.

One main objective of this work is to understand the numerical performance of a type of balanced truncation model order reduction approach for a specific nonlinear PDE system with inputs and outputs.  Balanced truncation for linear input-output systems was first introduce by Moore in 1981 \cite{moore1981principal}, and is now a very popular model reduction approach \cite{Antoulas05, ZhouDoyleGlover96}.  The theory of balanced truncation model reduction for
nonlinear input-output systems was introduced later by Scherpen \cite{scherpen1993balancing}, but this method is not computationally feasible for large-scale systems. We consider another type of nonlinear balanced truncation model reduction that is closely related to balanced truncation for linear systems; specifically, the modes obtained from linear balanced truncation are used to reduce the nonlinear system via a Petrov-Galerkin projection.  This approach is computationally tractable and therefore has potential for various applications; however, there is no existing theoretical foundation for this MOR approach.

Due to this lack of theory, numerical studies are useful to test the performance of this MOR approach.  We are aware of only one detailed numerical study: in \cite{ilak2010model}, the authors numerically show that this nonlinear balanced truncation MOR approach is very effective for a 1D complex Ginzburg-Landau equation.

In this work, we consider the same model reduction approach for a nonlinear input-output cable-mass system that is represented by a one dimensional damped wave equation with dynamic
boundary conditions at both ends.  This model, which we introduce in Section \ref{sec:the_model}, was originally considered
as a heuristic model for a wave tank with a wave energy converter
\cite{Shoori14}.  We present detailed numerical experiments using the finite difference method and the balanced truncation MOR technique in Sections \ref{sec:BT_MOR} and \ref{sec:numerical_results}.

We believe the nonlinear cable-mass model considered here has not been explored elsewhere; therefore, we prove the well-posedness and exponential stability of the unforced linear and nonlinear models in Sections \ref{sec:linear_problem} and \ref{sec:nonlinear_problem}.  The well-posedness and exponential stability of many types of wave equation models with dynamic boundary conditions have been explored in the literature; see, e.g., \cite{burns1998reduced, conrad2001uniform, fourrier2013regularity, morgul1998stabilization, zhang2016stabilization} and the references therein.  The primary difference in the model considered here as compared to most of the models considered elsewhere is that the dynamic boundary conditions hold on all boundaries.  The paper \cite{conrad2001uniform} also considers a 1D wave equation with dynamic boundary conditions on all boundaries; however, the physical system considered in that work leads to very different boundary conditions than the ones we consider here.

MOR for wave equations has been discussed in the literature (see, e.g., \cite{AmsallemHetmaniuk14, batten2010reduced, GongWangWang17, herkt2013convergence, HuynhKnezevicPatera11, PengMohseni16, pereyra2016model}), however many existing works do not consider input-output model reduction as we do here.  The work \cite{batten2010reduced} also considers input-output types of model reduction for a different cable-mass model; however, that work explores the effectiveness of the model reduction for feedback control applications.  Feedback control of other PDE models with input in dynamic boundary conditions has also been explored in other works (see, e.g., \cite{BurnsCliff14, BurnsZietsman12, BurnsZietsman16, King94, Morris02}), however we do not believe model reduction has been explored in depth for such systems.

We also note that a preliminary version of this work appeared in \cite{battenmodel}; in this version, we consider a wider class of interior damping mechanisms, give more complete theoretical results, and provide more detailed numerical experiments.  Furthermore, after the conference paper \cite{battenmodel} was published, we discovered and corrected an error in our model reduction code.  The new computational experiments presented here indicate the MOR technique is far more accurate than reported in \cite{battenmodel}.

%------------------------------------------------------------------------------------------------
\section{The Model}
\label{sec:the_model}
%------------------------------------------------------------------------------------------------

We consider a flexible cable with mass-spring systems attached to each
end. Figure \ref{fig:cable_mass} illustrates the cable-mass system of interest. Each mass-spring system is connected to a rigid horizontal support.  The dotted line represents
the equilibrium position of the system.  Let $ w_0(t) $, $ w(t,x) $, and $ w_l(t) $ denote the position below equilibrium of the left mass (at location $ x = 0 $), the cable at location $ x $, and the right mass (at location $ x = l $), respectively, at time $ t $.  We assume the system is driven by an external force acting on the left mass-spring system, and that there are no other external forces.

\begin{figure}[htb]
\centering
\scalebox{1}{\includegraphics{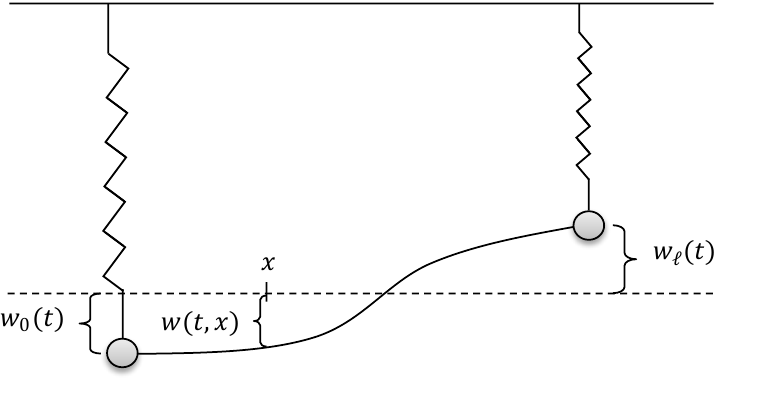}}
\caption{\label{fig:cable_mass}The cable mass system}
\end{figure}

We model the motion of the flexible cable with a damped 1D wave equation
on $0<x<l$. We include both Kelvin-Voigt and viscous damping in the model. We model the mass-spring systems with damped second order oscillators.  The left mass-spring system includes a time dependent external force input $u(t)$, and the right mass-spring system includes a nonlinear stiffening force.  This gives a wave equation with dynamic boundary conditions:
\begin{subequations}\label{eq:(1)}
\begin{align}
  w_{tt}(t,x)+\alpha w_{t}(t,x) &= \gamma w_{txx}(t,x)+\beta^{2}w_{xx}(t,x),\label{eq:(1a)}\\
  m_{0}\ddot{w}_{0}(t)+\alpha_{0}\dot{w}_{0}(t)+k_{0}w_{0}(t) &= \big( \gamma w_{tx}(t,0)+\beta^{2}w_{x}(t,0) \big)+u(t),\label{eq:(1b)}\\%\nonumber \\
  m_{l}\ddot{w}_{l}(t)+\alpha_{l}\dot{w}_{l}(t)+k_{l}w_{l}(t) &= \big( -\gamma w_{tx}(t,l)-\beta^{2}w_{x}(t,l) \big) - k_{3}\left[w_{l}(t)\right]^{3}.\label{eq:(1c)}%\nonumber \\\nonumber
\end{align}
\end{subequations}
Each term in parenthesis in the dynamic boundary conditions is the force of the cable acting on the mass.  Here, $\gamma$ is the Kelvin-Voigt damping parameter, $\alpha$, $\alpha_{0}$, $\alpha_{l}$ are viscous damping parameters, $ m_0 $ and $ m_l $ are the masses, and $ k_0 $, $ k_l $, and $ k_3 $ are the stiffness parameters.  In the model the damping parameters are nonnegative, and the wave equation parameter $ \beta $ as well as the mass and stiffness parameters are all positive.  Finally, the position of the cable at each boundary must equal the position of each mass, therefore we have the \textit{displacement compatibility condition}
\begin{equation}\label{eqn:displacement_cc}
  w(t,0)=w_{0}(t),  \quad  w(t,l)=w_{l}(t).
\end{equation}

For the model reduction problem, we assume we have two system outputs:
the position and the velocity of the right mass, i.e.,
\[
y_{1}(t)=w_{l}(t),  \quad  y_{2}(t)=\dot{w}_{l}(t).
\]

%------------------------------------------------------------------------------------------------
\subsection{The Energy Function}
\label{subsec:energy}
%------------------------------------------------------------------------------------------------

Next, we give a preliminary investigation of the change in energy of the unforced system, i.e., the system with $ u(t) = 0 $.  This will help us obtain the correct inner products for an abstract formulation of the system.  Later we prove the energy decays to zero exponentially fast under certain assumptions on the system parameters.

Assume the solution of the above system is sufficiently smooth, and define the total kinetic energy of the cable as
\[
  E_{C,K}(t) = \frac{1}{2}\int_{0}^{l}w_{t}^{2}\,dx.
\]
Differentiating with respect to time and using the wave equation (\ref{eq:(1a)}) gives
\begin{align*}
  \frac{dE_{C,K}}{dt} &= \frac{1}{2}\int_{0}^{l}2w_{t}w_{tt}\,dx\\
    &= \int_{0}^{l} w_{t}(t,x) \, \big( \gamma w_{txx}(t,x)+\beta^{2}w_{xx}(t,x)-\alpha w_{t}(t,x) \big) \,dx.
\end{align*}
Integrate by parts to obtain
\begin{align*}
\frac{dE_{C,K}}{dt}= & -\gamma\int_{0}^{l}\left(w_{tx}(t,x)\right)^{2}dx-\beta^{2}\int_{0}^{l}w_{x}(t,x)w_{tx}(t,x)dx-\alpha\int_{0}^{l}\left(w_{t}(t,x)\right)^{2}dx\\
 & \,\,\,\,\,\,\,\,\,\,+w_{t}(t,l)\left[\gamma w_{tx}(t,l)+\beta^{2}w_{x}(t,l)\right]-w_{t}(t,0)\left[\gamma w_{tx}(t,0)+\beta^{2}w_{x}(t,0)\right].
\end{align*}
Using the boundary conditions and the displacement compatibility condition (\ref{eqn:displacement_cc}) gives
\begin{align*}
\frac{dE_{C,K}}{dt}= & -\gamma\int_{0}^{l}\left(w_{tx}(t,x)\right)^{2}dx-\beta^{2}\int_{0}^{l}w_{x}(t,x)w_{tx}(t,x)dx-\alpha\int_{0}^{l}\left(w_{t}(t,x)\right)^{2}dx\\
 & \,\,\,\,\,\,\,\,\,\,-\dot{w}_{l}(t)\left[m_{l}\ddot{w}_{l}(t)+\alpha_{l}\dot{w}_{l}(t)+k_{l}w_{l}(t)+k_{3}\left[w_{l}(t)\right]^{3}\right]\\
 & \,\,\,\,\,\,\,\,\,\,-\dot{w}_{0}(t)\left[m_{0}\ddot{w}_{0}(t)+\alpha_{0}\dot{w}_{0}(t)+k_{0}w_{0}(t)\right].
\end{align*}
This can be rewritten as
\begin{gather*}
  \frac{d}{dt} \Bigg[ \frac{1}{2}\int_{0}^{l}w_{t}^{2}\,dx\,+\frac{m_{l}}{2}\left(\dot{w}_{l}(t)\right)^{2}+ \frac{m_{0}}{2}\left(\dot{w}_{0}(t)\right)^{2}+\,\frac{\beta^{2}}{2}\int_{0}^{l}w_{x}^{2}\,dx\\
  \quad +\frac{k_{l}}{2}\left(w_{l}(t)\right)^{2}+\frac{k_{0}}{2}\left(w_{0}(t)\right)^{2}+\frac{k_{3}}{4}\left(w_{l}(t)\right)^{4} \Bigg] = \\
  \qquad -\gamma\int_{0}^{l} w_{tx}^{2} \, dx - \alpha\int_{0}^{l} w_{t}^{2} \,dx - \alpha_{0}\left(\dot{w}_{0}(t)\right)^{2} - \alpha_{l}\left(\dot{w}_{l}(t)\right)^{2}.
\end{gather*}

This suggests defining the system kinetic energy and potential energy as
\begin{align*}
E_{K}= & \int_{0}^{l}\frac{1}{2}w_{t}^{2}\,dx\,+\frac{m_{l}}{2}\left(\dot{w}_{l}(t)\right)^{2}+\frac{m_{0}}{2}\left(\dot{w}_{0}(t)\right)^{2},\\
E_{P}= & \int_{0}^{l}\frac{\beta^{2}}{2}w_{x}^{2}\,dx+\frac{k_{l}}{2}\left(w_{l}(t)\right)^{2}+\frac{k_{0}}{2}\left(w_{0}(t)\right)^{2}+\frac{k_{3}}{4}\left(w_{l}(t)\right)^{4}.
\end{align*}
These energy expressions can also be obtained by considering the kinetic
energy and potential energy of each component of the system.

The above energy equation gives
\begin{align}
\frac{d}{dt}E &= \frac{d}{dt}\left(E_{K}+E_{P}\right)\nonumber \\
  &= -\left[\gamma\int_{0}^{l} w_{tx}^{2} \,dx + \alpha\int_{0}^{l} w_{t}^{2} \, dx + \alpha_{0}\left(\dot{w}_{0}(t)\right)^{2} + \alpha_{l}\left(\dot{w}_{l}(t)\right)^{2}\right]\nonumber% \\
\end{align}
Therefore, we have $\dot{E}(t)\leq0$.
%

%------------------------------------------------------------------------------------------------
\subsection{Variational Form}
\label{subsec:var_form}
%------------------------------------------------------------------------------------------------

In this subsection, we introduce the variational or weak form of
the system. Later we use this form to analyze the model.  Assume the solution $ [w,w_0,w_l] $ is smooth and satisfies the displacement compatibility condition (\ref{eqn:displacement_cc}).  Multiply the wave equation (\ref{eq:(1a)}) by a smooth test function $ h = h(x) $ satisfying $ h(0) = h_0 $ and $ h(l) = h_l $ and integrate by parts to obtain
\begin{align*}
  \int_{0}^{l}w_{tt}\,h\,dx+\alpha\int_{0}^{l}w_{t}\,h\,dx-h_{l}\left[\gamma w_{tx}(l)+\beta^{2}w_{x}\left(l\right)\right]+h_{0}\left[\gamma w_{tx}(0)+\beta^{2}w_{x}\left(0\right)\right]\\
  +\gamma\int_{0}^{l}w_{tx}\,h_{x}\,dx+\beta^{2}\int_{0}^{l}w_{x}\,h_{x}\,dx=0.
\end{align*}
As in the above energy argument, we use the boundary conditions to give the variational form
\begin{gather}
  \int_{0}^{l}w_{tt}\,h\,dx+m_{l}\ddot{w}_{l}(t)h_{l}+m_{0}\ddot{w}_{0}(t)h_{0}+ \beta^{2}\int_{0}^{l}w_{x}\,h_{x}\,dx+k_{l}w_{l}(t)h_{l}+k_{0}w_{0}(t)h_{0}\nonumber \\
  \quad +\int_{0}^{l}\left[\alpha w_{t}\,h\,+\gamma w_{tx}\,h_{x}\,\right]dx+h_{l}\alpha_{l}\dot{w}_{l}(t)+h_{0}\alpha_{0}\dot{w}_{0}(t)+ k_{3} h_l \left[w_{l}(t)\right]^{3}=0.\label{eq:variational formula one vertion}
\end{gather}

Now we give details about the function spaces to make the weak formulation precise.  Let $H$ be the real Hilbert space $H=L^{2}(0,l)\times\mathbb{R}^{2}$
with the inner product of $z=\left[w,w_{0},w_{l}\right]\in H$~and
$\psi=\left[p,\,p_{0},\,p_{l}\right]\in H$ defined by
\begin{equation}
  (z,\psi)_{H}=\int_{0}^{l}w \,p \, dx + m_{0} \, w_{0} \, p_{0} + m_{l} \, w_{l} \,p_{l}.\label{eq:Hnorm-1}
\end{equation}
Let $V\subset H$ be the set of elements $z=\left[w,w_{0},w_{l}\right]\in H^{1}(0,l)\times\mathbb{R}^{2}$
satisfying the displacement compatibility condition $w(0)=w_{0}$ and $w(l)=w_{l}$.  For $z\in V$ as above and $\psi=\left[p,\,p_{0},\,p_{l}\right]\in V$
define the $V$ inner product of $z$ with $\psi$ by
\begin{equation}
  (z,\psi)_V = \int_{0}^{l}\beta^{2} \, w_{x} \,p_{x} \,dx + k_{0} \, w_{0} \, p_{0} + k_{l} \, w_{l} \,p_{l}.\label{eq:vnorm-1}
\end{equation}
We also use the notation $ \sigma_{1}(z,\psi) = (z,\psi)_V $.

The $ H $ and $ V $ inner products, (\ref{eq:Hnorm-1}) and (\ref{eq:vnorm-1}), can
be derived from the energy function; the $H$ and $V$ norms are directly related to the system kinetic and potential energies,
respectively.  Specifically,
\[
  E_K = \frac{1}{2} ( z_t,z_t )_H = \frac{1}{2} \| z_t \|_H^2,  \quad  E_P = \frac{1}{2} ( z,z )_V + \frac{k_3}{4} w_l^4 = \frac{1}{2} \| z \|_V^2  + \frac{k_3}{4} w_l^4.
\]
Furthermore, both inner products appear in the variational form (\ref{eq:variational formula one vertion}).

The Gelfand triple is $V\hookrightarrow H\hookrightarrow V'$ with
pivot space $H$ and the algebraic dual of $V$ is $V'$. We define
$\langle g,v\rangle$ for $g\in V',v\in V$ as $\langle g,v\rangle=g(v)$.  Note if $ g \in H $ and $ v \in V $, then $ \langle g, v \rangle = (g,v)_H $.  Also, we define the damping bilinear form $\sigma_{2}:\,V\times V\rightarrow\mathbb{R}$
\begin{equation}
  \sigma_{2}(z,\psi)=\int_{0}^{l}(\gamma \, w_{x} \, p_{x} + \alpha \, w \, p) \,dx + \alpha_{0} \, w_{0} \, p_{0}+\alpha_{l} \, w_{l} \, p_{l}. \label{eq:sigma2norm}
\end{equation}
Note that this bilinear form occurs in the variational form (\ref{eq:variational formula one vertion}) as a damping term with all first order time derivatives.

The spaces and inner products are motivated by the above variational
form (\ref{eq:variational formula one vertion}). Further we can rewrite
the above variational form (\ref{eq:variational formula one vertion}) as
\begin{equation}
\langle z_{tt},\psi\rangle+\sigma_{1}(z(t),\psi)+\sigma_{2}(z_{t},\psi)+\left(f(z),\psi\right)_{H}=0,\label{eq:variation formula full}
\end{equation}
where $f(z)=[0,0,k_{3} m_l^{-1} w_{l}^{3}]$ is the
nonlinear term.

%------------------------------------------------------------------------------------------------
\section{The Linear Problem}
\label{sec:linear_problem}
%------------------------------------------------------------------------------------------------

We begin by analyzing the variational form for the linear problem
\[
\langle z_{tt},\psi\rangle+\sigma_{1}(z(t),\psi)+\sigma_{2}(z_{t},\psi)=0.
\]
We prove the linear problem is well-posed, and also exponentially stable under certain assumptions on the damping parameters.  The exponential stability is necessary for the application of the balanced truncation model reduction technique considered later.  Some of the results and proofs in this section are given in the preliminary version of this work \cite{battenmodel}; we reproduce them for completeness.

%------------------------------------------------------------------------------------------------
\subsection{Function Spaces}
%------------------------------------------------------------------------------------------------

We first present basic results about the function spaces that we frequently use in this work.
\begin{lemma}\label{lemma:V_properties}
The space $V$ with the above inner product (\ref{eq:vnorm-1}) is
a real Hilbert space, $V$ is dense in $H$, and $V$ is separable.
\end{lemma}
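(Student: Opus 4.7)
The plan is to verify the three assertions in order: inner product / Hilbert space structure, density, and separability. Throughout I will exploit the continuous embedding $H^1(0,l) \hookrightarrow C[0,l]$, which makes the trace evaluations $w(0), w(l)$ unambiguous on $V$.

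First I will check that $(\cdot,\cdot)_V$ defined in (\ref{eq:vnorm-1}) is genuinely an inner product. Bilinearity and symmetry are immediate. For positive-definiteness, if $(z,z)_V = 0$ then $\beta^2\int_0^l w_x^2\, dx = 0$ together with $k_0 w_0^2 = 0$ and $k_l w_l^2 = 0$. Since $\beta, k_0, k_l > 0$, $w$ is constant in $H^1(0,l)$ and $w_0 = w_l = 0$; the displacement compatibility condition $w(0) = w_0 = 0$ then forces $w \equiv 0$, so $z = 0$. Next I will show $V$ is complete by showing that the $V$-norm is equivalent on $V$ to the standard norm on $H^1(0,l)\times\mathbb{R}^2$, and that $V$ is closed in this larger space. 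For the non-trivial bound, writing $w(x) = w(0) + \int_0^x w_y(y)\, dy$ and using $w(0) = w_0$ yields $\|w\|_{L^2}^2 \leq 2l\, w_0^2 + 2l^2\|w_x\|_{L^2}^2$, which combined with $|w_x|_{L^2}^2, |w_0|^2, |w_l|^2$ gives $\|z\|_{H^1\times\mathbb{R}^2}^2 \leq C \|z\|_V^2$; the reverse inequality is trivial. Because the trace map $w \mapsto (w(0), w(l))$ is continuous from $H^1(0,l)$ to $\mathbb{R}^2$, the set of tuples satisfying the compatibility conditions is closed in $H^1(0,l)\times\mathbb{R}^2$; hence $V$ is a closed subspace of a Hilbert space and therefore complete in the equivalent $V$-norm.

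For density of $V$ in $H$, I will approximate a generic $[w,w_0,w_l] \in H$ componentwise. Choose $w^n \in C_c^\infty(0,l)$ with $w^n \to w$ in $L^2(0,l)$, and fix smooth cutoffs $\phi_n^0, \phi_n^l \in C^\infty[0,l]$ with $\phi_n^0(0) = 1$, $\phi_n^0(l) = 0$, $\phi_n^l(0) = 0$, $\phi_n^l(l) = 1$, and $\|\phi_n^0\|_{L^2} + \|\phi_n^l\|_{L^2} \to 0$ (for instance, affine tent functions supported in shrinking intervals near the endpoints). Then the sequence
\[
z_n = \bigl[\, w^n + w_0\,\phi_n^0 + w_l\,\phi_n^l,\; w_0,\; w_l\,\bigr]
\]
belongs to $V$ by construction and converges to $[w,w_0,w_l]$ in $H$ because the first component converges in $L^2$ while the remaining two components are exact.

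Finally, separability of $V$ is inherited: $H^1(0,l)$ is separable (it is isomorphic to a closed subspace of a countable product of $L^2$ spaces), so $H^1(0,l)\times\mathbb{R}^2$ is separable as a finite product of separable metric spaces, and $V$ is separable as a subspace of a separable metric space. I do not anticipate any serious obstacle; the only step requiring care is the norm-equivalence estimate used for completeness, where the displacement compatibility condition is essential to control $\|w\|_{L^2}$ by $|w_0|$ and $\|w_x\|_{L^2}$.
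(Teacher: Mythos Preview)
Your argument is correct. The positive-definiteness step is identical to the paper's. For the remaining three items, however, you take a genuinely different route in each case, so a brief comparison is worthwhile.

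For completeness, the paper works directly with a Cauchy sequence $z^n=[w^n,w_0^n,w_l^n]$: it observes that $[w_x^n,w_0^n,w_l^n]$ is Cauchy in $L^2(0,l)\times\mathbb{R}^2$, passes to a limit $[q,w_0,w_l]$, and reconstructs $w(x)=w_0+\int_0^x q$. You instead prove that $\|\cdot\|_V$ is equivalent on $V$ to the $H^1\times\mathbb{R}^2$ norm and that $V$ is closed there by continuity of the trace. Your approach is cleaner and reusable (it immediately gives the continuous embedding $V\hookrightarrow H$ as a by-product), while the paper's hands-on construction makes the limit element explicit.

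For density, the paper subtracts the affine interpolant $g(x)=w_0+l^{-1}(w_l-w_0)x$ and approximates $w-g$ by $H_0^1$ functions, so a single fixed corrector handles the boundary values. You keep $w_0,w_l$ fixed but let the correctors $\phi_n^0,\phi_n^l$ shrink in $L^2$. Both work; the paper's version is slightly simpler because it avoids the extra limit in the correctors. (One small slip: your suggested ``affine tent functions'' are only Lipschitz, not $C^\infty$ as you state, but only $H^1$ regularity is needed so this is harmless.)

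For separability, the paper exhibits an explicit countable set $M$ (two affine functions plus the sines) and shows $M^\perp=\{0\}$; you simply inherit separability from the ambient separable space $H^1(0,l)\times\mathbb{R}^2$. Your argument is shorter and more robust, whereas the paper's has the mild advantage of producing a concrete spanning family.
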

The proof of Lemma \ref{lemma:V_properties} is given in the Appendix.

We use the inequalities in the following lemma to prove the well-posedness and exponential stability of the system.
\begin{lemma}\label{lem:lemma to show inequalyties}
If $z=[w,w_{0},w_{l}]\in V$, then
\begin{align}
  \left|w(x)\right|^{2} &\leq 2 w_{0}^{2}+2l\left\Vert w_{x}\right\Vert _{L^{2}(0,l)}^{2},\label{eq:8-1}\\
  \left\Vert w\right\Vert _{L^{2}(0,l)}^{2} &\leq 2l \left[ w_{0}^{2}+l\left\Vert w_{x}\right\Vert_{L^{2}(0,l)}^{2}\right],\label{eq:9-1}\\
  w_{l}^{2} &\leq 2w_{0}^{2}+2l\left\Vert w_{x}\right\Vert_{L^{2}(0,l)}^{2}.\label{eq:10}\\
%
%  \left|w(x)\right|^{2} &\leq 2\left|w_{l}\right|^{2}+2l\left\Vert w_{x}\right\Vert _{L^{2}(0,l)}^{2}\label{eq:9-2}\\
%
%  \left\Vert w(x)\right\Vert_{L^{2}(0,l)}^{2} &\leq 2l\left[\left|w_{l}\right|^{2}+l\left\Vert w_{x}\right\Vert _{L^{2}(0,l)}^{2}\right]\label{eq:13}\\
%
  w_{0}^{2} &\leq 2w_{l}^{2}+2l\left\Vert w_{x}\right\Vert _{L^{2}(0,l)}^{2}\label{eq:14}.
\end{align}
\end{lemma}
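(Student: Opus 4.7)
The proof proposal is to establish all four inequalities by the same basic mechanism: represent $w(x)$ as an integral from an endpoint using the fundamental theorem of calculus, apply the Cauchy--Schwarz inequality to the resulting integral, and then apply the elementary bound $(a+b)^2 \leq 2a^2 + 2b^2$. Since $z = [w,w_0,w_l] \in V$ means $w \in H^1(0,l)$ with $w(0) = w_0$ and $w(l) = w_l$, the trace values are literally the endpoint values of $w$, which is precisely what makes these one-dimensional Sobolev-type bounds quantitative.

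First I would prove the pointwise inequality \eqref{eq:8-1}. Write
\[
w(x) = w_0 + \int_0^x w_x(s)\,ds,
\]
and estimate $\bigl|\int_0^x w_x(s)\,ds\bigr| \leq \sqrt{x}\,\|w_x\|_{L^2(0,l)} \leq \sqrt{l}\,\|w_x\|_{L^2(0,l)}$ by Cauchy--Schwarz. Squaring and using $(a+b)^2 \leq 2a^2 + 2b^2$ yields \eqref{eq:8-1}.

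Next, the $L^2$ bound \eqref{eq:9-1} follows immediately by integrating \eqref{eq:8-1} in $x$ over $(0,l)$: the right-hand side contributes $2l\,w_0^2 + 2l^2\,\|w_x\|_{L^2(0,l)}^2$, which is exactly the claimed bound. Inequality \eqref{eq:10} is simply \eqref{eq:8-1} evaluated at $x=l$, using $w(l) = w_l$. Finally, for \eqref{eq:14} I would reverse the direction of integration, writing
\[
w(x) = w_l - \int_x^l w_x(s)\,ds,
\]
apply Cauchy--Schwarz and $(a+b)^2 \leq 2a^2 + 2b^2$ as before to get $|w(x)|^2 \leq 2w_l^2 + 2l\,\|w_x\|_{L^2(0,l)}^2$, then evaluate at $x=0$ using $w(0) = w_0$.

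There is no real obstacle here; the only mildly technical point is that the pointwise identity $w(x) = w(0) + \int_0^x w_x\,ds$ requires $w \in H^1(0,l)$, which is available by the definition of $V$, and that $H^1(0,l)$ embeds into $C([0,l])$ so that the endpoint evaluations $w(0) = w_0$ and $w(l) = w_l$ are legitimate. Everything else is a routine application of Cauchy--Schwarz and the quadratic inequality, so the proof will be short.
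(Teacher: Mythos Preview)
Your proposal is correct and follows essentially the same approach as the paper: the paper likewise writes $w(x) = w_0 + \int_0^x w_\xi\,d\xi$, applies H\"older's (i.e., Cauchy--Schwarz) and Young's inequality (your $(a+b)^2 \le 2a^2 + 2b^2$) to obtain \eqref{eq:8-1}, then integrates for \eqref{eq:9-1}, evaluates at $x=l$ for \eqref{eq:10}, and reverses the direction of integration for \eqref{eq:14}. Your remark about the $H^1 \hookrightarrow C$ embedding is a welcome clarification but otherwise the arguments coincide.
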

\begin{proof}
Since $ w \in H^1(0,l) $ and $ w(0) = w_0 $, we have
\[
w(x)=w_{0}+\int_{0}^{x}w_{\xi}(\xi)\,d\xi.
\]
Taking absolute values, using the triangle inequality, and then applying H\"{o}lder's inequality gives
$$
  \left|w(x)\right| \leq \left|w_{0}\right|+\int_{0}^{x}\left|w_{\xi}(\xi)\right|\,d\xi  \leq  \left|w_{0}\right|+l^{\frac{1}{2}}\left\Vert w_{x}\right\Vert _{L^{2}(0,l)}.
$$
Squaring this inequality and using Young's inequality gives (\ref{eq:8-1}); integrating (\ref{eq:8-1}) from $x=0$ to $x=l$ gives (\ref{eq:9-1}); and evaluating equation (\ref{eq:8-1}) at $x=l$ yields (\ref{eq:10}).

Using $ w(x)=w_{l}-\int_{x}^{l}w_{\xi}(\xi)\,d\xi $, the proof of (\ref{eq:14}) follows similarly.
\end{proof}

\begin{lemma}
V is continuously embedded in H. \label{Lemma V cts embedded}\end{lemma}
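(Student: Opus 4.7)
The plan is to exhibit an explicit constant $C > 0$ such that $\|z\|_H \leq C \|z\|_V$ for every $z = [w,w_0,w_l] \in V$, which by definition gives the continuous embedding. Since the $H$ and $V$ norms are defined by the weighted sums
\[
\|z\|_H^2 = \|w\|_{L^2(0,l)}^2 + m_0 w_0^2 + m_l w_l^2, \qquad \|z\|_V^2 = \beta^2 \|w_x\|_{L^2(0,l)}^2 + k_0 w_0^2 + k_l w_l^2,
\]
the natural strategy is to dominate each of the three $H$-norm pieces by a multiple of $\|z\|_V^2$.

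The boundary pieces are immediate: since $k_0, k_l > 0$ by assumption, $w_0^2 \leq k_0^{-1} \|z\|_V^2$ and $w_l^2 \leq k_l^{-1} \|z\|_V^2$. For the $L^2$ interior term, I would invoke the inequality (\ref{eq:9-1}) from Lemma \ref{lem:lemma to show inequalyties}, which gives $\|w\|_{L^2(0,l)}^2 \leq 2l\, w_0^2 + 2l^2 \|w_x\|_{L^2(0,l)}^2$, and then bound each of these factors by $\|z\|_V^2$ using the $k_0$ and $\beta^2$ weights respectively.

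Combining these pointwise bounds yields
\[
\|z\|_H^2 \leq \left( \frac{2l + m_0}{k_0} + \frac{2l^2}{\beta^2} + \frac{m_l}{k_l} \right) \|z\|_V^2,
\]
so the embedding constant is the square root of the bracketed quantity. This is clearly finite under the standing positivity assumptions on $\beta$, $m_0$, $m_l$, $k_0$, $k_l$.

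There is no real obstacle here; the only small subtlety is that $w_0$ (rather than $w_l$) has been chosen as the base point in deriving (\ref{eq:9-1}), but by the displacement compatibility condition either choice is available, and the argument would work identically using (\ref{eq:14}) instead. The proof is essentially a one-line chain of inequalities, so I would present it concisely without further decomposition.
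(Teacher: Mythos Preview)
Your proof is correct and follows essentially the same approach as the paper: both use inequality (\ref{eq:9-1}) to control $\|w\|_{L^2}^2$ in terms of $w_0^2$ and $\|w_x\|_{L^2}^2$, then absorb the three resulting pieces into $\|z\|_V^2$ via the positive weights $k_0$, $\beta^2$, $k_l$. The only cosmetic difference is that the paper takes the maximum of $\frac{2l+m_0}{k_0}$, $\frac{2l^2}{\beta^2}$, $\frac{m_l}{k_l}$ as the embedding constant (since the corresponding weighted terms already sum to $\|z\|_V^2$), whereas you bound each term by the full $\|z\|_V^2$ and take the sum; your constant is slightly larger but the argument is the same.
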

\begin{proof}
Let $ z = [w,w_0,w_l] \in V $.  We use the $H $ and $V$ inner products and the inequality (\ref{eq:9-1})
from Lemma \ref{lem:lemma to show inequalyties} to obtain
\begin{align}
\left\Vert z\right\Vert _{H}^{2} &= \int_{0}^{l}w^{2}dx+m_{0}w_{0}^{2}+m_{l}w_{l}^{2}\nonumber \\
  &= \left\Vert w\right\Vert _{L^{2}(0,l)}^{2}+m_{0}w_{0}^{2}+m_{l}w_{l}^{2}\nonumber \\
  &\leq 2l\left[\left|w_{0}\right|^{2}+l\left\Vert w_{x}\right\Vert _{L^{2}(0,l)}^{2}\right]+m_{0}w_{0}^{2}+m_{l}w_{l}^{2}\nonumber \\
  &\leq 2l^{2}\int_{0}^{l}w_{x}^{2}\,dx+(2l+m_{0})w_{0}^{2}+m_{l}w_{l}^{2}\nonumber \\
  &= \left(\frac{2l^{2}}{\beta^{2}}\right)\beta^{2}\int_{0}^{l}w_{x}^{2}\,dx+\left(\frac{2l+m_{0}}{k_{0}}\right)k_{0}w_{0}^{2}+\left(\frac{m_{l}}{k_{l}}\right)k_{l}w_{l}^{2}\nonumber \\
  &\leq C_3 \left[k_{0}w_{0}^{2}+k_{l}w_{l}^{2}+\beta^{2}\int_{0}^{l}w_{x}^{2}\,dx\right],\nonumber% \\
\end{align}
where
$$
  C_3 = \max\left\{ \frac{2l+m_{0}}{k_{0}}, \frac{m_{l}}{k_{l}}, \frac{2l^{2}}{\beta^{2}} \right\}.
$$
This gives $ C_3^{-1} \| z \|_H^2 \leq \| z \|_V^2 $ for all $ v \in V $, and therefore $ V $ is continuously embedded in $ H $.
\end{proof}

%------------------------------------------------------------------------------------------------
\subsection{Well-Posedness and Exponential Stability}
\label{subsec:linear_theory}
%------------------------------------------------------------------------------------------------

To show the linear problem is well-posed, we rewrite the problem as $ \dot{x} = \mathcal{A} x $ and show $\mathcal{A}$ generates a $C_{0}$-semigroup on $\mathcal{H}=V\times H$.  We need the following basic concepts concerning bilinear forms acting on $ V $.
\begin{definition}
  A bilinear form $\sigma\,:\,V\times V\rightarrow\mathbb{R}$ is
  \begin{itemize}
    \item $V$-continuous if there exists $ c_1 > 0 $ such that $\left|\sigma(\varphi,\psi)\right|\leq c_{1}\left\Vert \varphi\right\Vert _{V}\left\Vert \psi\right\Vert _{V}$ for all $\varphi$ and $\psi$ in $V$;
    \item $V$-elliptic if there exists a constant $c_{2}>0$ such that $\sigma\left(\varphi,\varphi\right)\geq c_{2}\left\Vert \varphi\right\Vert_{V}^{2}$ for all $\varphi$ in $V$;
    \item $H$-semielliptic if there exists a constant $c_{3}\geq0$ such that $\sigma\left(\varphi,\varphi\right)\geq c_{3}\left\Vert \varphi\right\Vert_{H}^{2}$ for all $\varphi$ in $V$; and also $\sigma$ is $H$-elliptic if $c_{3}>0$.
  \end{itemize}
\end{definition}

We follow the presentation in \cite[Section 8.1]{banks2012functional} in order to find the linear operator $\mathcal{A}$.  First, Lemma (\ref{lem:lemma to show inequalyties}) can be used to show $\sigma_{2}$ is $V$-continuous.  Since $\sigma_{1} $ and $\sigma_{2}$ are $ V $-continuous we have that there exists
operators $A_{i}\in\mathcal{L}(V,V')$ for $i=1,2$ such that
\[
  \sigma_{i}(\varphi,\psi)=\langle A_{i}\varphi,\psi\rangle  \quad  \mbox{for all $ \varphi,\psi\in V $.}
\]
Define the operator $ \mathcal{A} : D(\mathcal{A}) \subset \mathcal{H} \to \mathcal{H} $ by
\[
  D(\mathcal{A})=\left\{ x=[\varphi,\psi]\in\mathcal{H}\,:\,\psi\in V, \,\, A_{1}\varphi+A_{2}\psi\in H\right\}
\]
and
\begin{equation}
\mathcal{A}=\left[\begin{array}{cc}
0 & I\\
-A_{1} & -A_{2}
\end{array}\right].\label{eq:operator A}
\end{equation}

\begin{theorem}
The operator $\mathcal{A}$ defined above generates a $C_{0}$-semigroup
on $\mathcal{H}=V\times H$.\label{thm:A generates c0 semigroup}\end{theorem}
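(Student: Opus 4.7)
The plan is to realize $\mathcal{A}$ as the generator of a $C_0$-contraction semigroup via the Lumer--Phillips theorem on $\mathcal{H}=V\times H$, endowed with the Hilbert-space inner product
\[
\langle [\varphi_1,\psi_1],[\varphi_2,\psi_2] \rangle_{\mathcal{H}} = \sigma_1(\varphi_1,\varphi_2)+(\psi_1,\psi_2)_H,
\]
which is a legitimate inner product because $\sigma_1(\cdot,\cdot)=(\cdot,\cdot)_V$ is literally the $V$ inner product. Before invoking the theorem I would record the ``infrastructure'' facts about the forms: $\sigma_1$ is symmetric, $V$-continuous, and $V$-elliptic with constant~$1$ by definition, while $\sigma_2$ is $V$-continuous by Cauchy--Schwarz combined with the pointwise and trace-type estimates of Lemma~\ref{lem:lemma to show inequalyties} and the embedding $V\hookrightarrow H$ of Lemma~\ref{Lemma V cts embedded}. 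Since all damping parameters are nonnegative, $\sigma_2(\varphi,\varphi)\ge 0$ for every $\varphi\in V$.

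The dissipativity step is a short computation. For $x=[\varphi,\psi]\in D(\mathcal{A})$ we have $A_1\varphi+A_2\psi\in H$, so the $H$-pairing below collapses to the $V',V$ duality and expands via the definitions of $A_1,A_2$:
\[
\langle \mathcal{A} x,x\rangle_{\mathcal{H}} = \sigma_1(\psi,\varphi) + (-A_1\varphi-A_2\psi,\psi)_H = \sigma_1(\psi,\varphi) - \sigma_1(\varphi,\psi) - \sigma_2(\psi,\psi).
\]
Symmetry of $\sigma_1$ cancels the first two terms and leaves $\langle \mathcal{A} x,x\rangle_{\mathcal{H}} = -\sigma_2(\psi,\psi)\le 0$, so $\mathcal{A}$ is dissipative.

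For the range condition I would fix any $\lambda>0$ and, given $[f,g]\in V\times H$, reduce $(\lambda I-\mathcal{A})[\varphi,\psi]=[f,g]$ to $(\lambda^2 I+\lambda A_2+A_1)\varphi = g+\lambda f+A_2 f$ with $\psi:=\lambda\varphi-f$, which I would solve weakly by Lax--Milgram applied to
\[
a_\lambda(\varphi,\eta) = \lambda^2(\varphi,\eta)_H+\lambda\sigma_2(\varphi,\eta)+\sigma_1(\varphi,\eta).
\]
The form $a_\lambda$ is $V$-continuous (the $H$-term continuous on $V\times V$ because of $V\hookrightarrow H$) and satisfies $a_\lambda(\varphi,\varphi)\ge \|\varphi\|_V^2$ since the other two summands are nonnegative, so Lax--Milgram produces a unique $\varphi\in V$ solving the weak form against the continuous functional $\eta\mapsto (g,\eta)_H+\lambda(f,\eta)_H+\sigma_2(f,\eta)$. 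Setting $\psi=\lambda\varphi-f\in V$ and reading $A_1\varphi+A_2\psi=g-\lambda\psi\in H$ off the equation certifies $[\varphi,\psi]\in D(\mathcal{A})$ and supplies the preimage.

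Density of $D(\mathcal{A})$ in $\mathcal{H}$ is then routine (it follows from the surjectivity of $\lambda I-\mathcal{A}$ by standard arguments, or alternatively by approximating $[\varphi,\psi]\in V\times H$ with smoother pairs for which the operator equation is easy to verify), after which Lumer--Phillips delivers the $C_0$-semigroup. I expect the principal subtlety to be the bookkeeping in the range step --- in particular the a posteriori identification $A_1\varphi+A_2\psi\in H$ rather than merely in $V'$, which is forced by the equation and cannot be seen from $A_1\varphi$ or $A_2\psi$ in isolation.
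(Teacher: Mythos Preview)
Your argument is correct. The dissipativity computation, the Lax--Milgram reduction for the range condition, and the identification $A_1\varphi+A_2\psi=g-\lambda\psi\in H$ all go through as you describe, and density of $D(\mathcal{A})$ indeed follows from surjectivity of $\lambda I-\mathcal{A}$ in a reflexive space.

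Your approach differs from the paper's only in that the paper does not carry out the Lumer--Phillips argument directly: it simply observes that $\sigma_1$ is the $V$ inner product and $\sigma_2$ is $H$-semielliptic, and then invokes Theorem~8.2 in \cite{banks2012functional}, which packages exactly the abstract second-order result you are proving by hand. In effect, you have unpacked the contents of that black-box theorem. What you gain is a self-contained proof that makes the mechanism visible (and highlights the role of the symmetry of $\sigma_1$ and nonnegativity of $\sigma_2$); what the paper gains is brevity, at the cost of depending on the reader having access to the reference. Neither route requires ideas beyond the other --- they are the same argument at different levels of abstraction.
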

\begin{proof}
Due to the properties of the spaces $ H $ and $ V $, the result follows directly from Theorem 8.2 in \cite{banks2012functional} since $ \sigma_1 $ is the $ V $ inner product and $ \sigma_2 $ is $ H $-semielliptic.
\end{proof}
Since $\mathcal{A}$ generates a $C_{0}$-semigroup $T(t)$ on $\mathcal{H}=V\times H$,
we have $T(t)x_{0}$ is the unique solution of $\dot{x}=Ax$ where $x(0)=x_{0}$.

For the exponential stability of the problem, we restrict our attention to the model with interior damping; i.e., the Kelvin-Voigt damping parameter $ \gamma $ is positive or the viscous damping parameter $ \alpha $ is positive.  In this case, the easiest way to prove exponential stability is to show $ \sigma_2 $ is $ H $-elliptic or $ V $-elliptic.  Note that since $ V $ is continuously embedded in $ H $, if $ \sigma_2 $ is $ V $-elliptic then it must also be $ H $-elliptic; additionally, if $ \sigma_2 $ is $ V $-elliptic then the semigroup is also analytic.
\begin{theorem}
If $ \sigma_2 $ is $ H $-elliptic, then the operator $\mathcal{A}$ defined in (\ref{eq:operator A}) is the infinitesimal
generator of an exponentially stable $C_{0}$-semigroup $T(t)$ on $\mathcal{H}=V\times H$.  Furthermore, if $ \sigma_2 $ is $ V $-elliptic, then $T(t)$ is exponentially stable and also analytic.\label{thm:exponential stable}\end{theorem}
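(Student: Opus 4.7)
The plan is to combine the semigroup generation result (Theorem \ref{thm:A generates c0 semigroup}) with an abstract Lyapunov-type argument for damped second-order evolution equations, or equivalently to invoke the Banks--Ito--Kappel stability theorem for $H$-elliptic damping from \cite{banks2012functional}. The analyticity assertion under the stronger hypothesis of $V$-ellipticity is a standard consequence of the parabolic-like structure created by strong (Kelvin--Voigt-type) damping.

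For exponential stability, I would introduce the natural energy functional $E(t) = \tfrac{1}{2}\|z_t\|_H^2 + \tfrac{1}{2}\|z\|_V^2$, which equals $\tfrac{1}{2}\|x(t)\|_{\mathcal{H}}^2$ for $x(t) = [z(t), z_t(t)]$. Differentiating along a smooth trajectory of the linear problem and using $\langle z_{tt}, \psi\rangle = -\sigma_1(z,\psi) - \sigma_2(z_t,\psi)$ yields $\dot{E}(t) = -\sigma_2(z_t, z_t) \le -c_3\|z_t\|_H^2$ under $H$-ellipticity, which controls only the kinetic part. I would therefore introduce a perturbed functional
\begin{equation*}
V_\epsilon(t) = E(t) + \epsilon \, \langle z_t, z\rangle_H
\end{equation*}
for sufficiently small $\epsilon > 0$. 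The Cauchy--Schwarz inequality combined with the continuous embedding $V \hookrightarrow H$ from Lemma \ref{Lemma V cts embedded} shows that $V_\epsilon$ is equivalent to $E$ for small $\epsilon$. A direct computation gives
\begin{equation*}
\dot{V}_\epsilon(t) = -\sigma_2(z_t, z_t) + \epsilon\|z_t\|_H^2 - \epsilon\|z\|_V^2 - \epsilon\,\sigma_2(z_t, z),
\end{equation*}
and, after absorbing the indefinite cross term $\epsilon\sigma_2(z_t, z)$ via Young's inequality into the dissipative contributions, one obtains $\dot{V}_\epsilon(t) \le -c\, V_\epsilon(t)$ for some $c > 0$. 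This yields exponential decay of $E(t)$, hence of $\|T(t)x_0\|_{\mathcal{H}}$, initially on a dense set of smooth data and then on all of $\mathcal{H}$ by density and the uniform boundedness of the semigroup.

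For the analyticity claim under the stronger assumption that $\sigma_2$ is $V$-elliptic, I would invoke the standard result (e.g.\ in the spirit of Chen--Russell and included in the framework of \cite{banks2012functional}) that when the damping form is coercive on the same space $V$ on which the stiffness form is coercive, $\mathcal{A}$ is sectorial on $\mathcal{H} = V \times H$, analogously to the Kelvin--Voigt-damped wave equation. The main obstacle is the cross term $\sigma_2(z_t, z)$ in the $H$-elliptic case: the $V$-continuity bound $|\sigma_2(z_t, z)| \le c_1\|z_t\|_V\|z\|_V$ involves the $V$-norm of $z_t$, which is not dominated by the dissipation $\sigma_2(z_t, z_t) \ge c_3\|z_t\|_H^2$. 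To get around this I would exploit the explicit structure of $\sigma_2$ in (\ref{eq:sigma2norm}) by splitting the Kelvin--Voigt piece (which is $V$-controlled and absorbable directly) from the viscous piece (bounded in $H$-norms, paired with a $\|z\|_V$ factor via the embedding); alternatively one invokes the abstract Banks--Ito exponential stability theorem, which handles $H$-elliptic damping in one stroke and avoids the explicit Lyapunov bookkeeping altogether.
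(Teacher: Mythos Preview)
Your proposal is correct, and your identification of the cross-term obstacle together with the two suggested fixes (splitting $\sigma_2$ into its Kelvin--Voigt and viscous pieces, or falling back on the abstract Banks--Ito result) is exactly on target. The paper's own proof, however, takes the shortest possible route: it simply invokes Theorems~8.1 and~8.3 of \cite{banks2012functional} directly, with no Lyapunov functional or computation at all. So your second fallback option \emph{is} the paper's entire argument.

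The direct Lyapunov approach you sketch is a genuinely different route that buys self-containment and some insight into why $H$-ellipticity suffices. Your handling of the cross term can in fact be streamlined: since $\sigma_2$ is symmetric, $\sigma_2(z_t,z)=\tfrac{1}{2}\tfrac{d}{dt}\sigma_2(z,z)$, so one may absorb $\tfrac{\epsilon}{2}\sigma_2(z,z)$ into the modified energy rather than estimating the cross term pointwise; $V$-continuity then guarantees equivalence with $E$. This avoids the somewhat delicate splitting you propose and works uniformly regardless of whether $\gamma>0$. Either way, your argument closes; the paper simply chose not to reproduce any of it.
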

\begin{proof}
This follows directly from Theorems 8.1 and 8.3 in \cite{banks2012functional}.
\end{proof}
In this work, we restrict our analysis to the cases where the damping bilinear form $\sigma_{2}$ is $H$-elliptic or $ V $-elliptic.  The analysis of exponential stability for the model when this condition is not satisfied is more involved; we leave the analysis of such cases to be considered elsewhere.  We prove exponential stable for the linear system for three main examples of damping parameter sets.  We also consider model reduction computations for two other examples in the numerical results.
\begin{description}
\item [{Example 1}] $\gamma,\alpha_{l}>0$ and $\alpha_{0}=\alpha=0$
\end{description}
We first consider the case of Kelvin-Voigt damping ($ \gamma > 0 $) and viscous damping in the right mass-spring system ($ \alpha_l > 0 $).  We prove $\sigma_{2}$ is $V$-elliptic.
Let $ z = [w,w_0,w_l] \in V $.  Using the inequality (\ref{eq:14}) gives
\begin{align*}
  \left\Vert z\right\Vert_{V}^{2} &= \int_{0}^{l}\beta^{2}w_{x}^{2}dx+k_{0}w_{0}^{2}+k_{l}w_{l}^{2}\\
    &\leq (\beta^{2}+2 l k_0)\int_{0}^{l}w_{x}^{2}dx+(k_{l}+2 l k_0)w_{l}^{2}\\
    &= \left(\frac{\beta^{2}+2lk_0}{\gamma}\right)\int_{0}^{l}\gamma w_{x}^{2}dx+\left(\frac{k_{l}+2lk_0}{\alpha_{l}}\right)\alpha_{l}w_{l}^{2}\\
    &\leq  C \sigma_2(z,z),
\end{align*}
where $ C = \max\left\{ \frac{\beta^{2}+2lk_0}{\gamma}, \frac{k_{l}+2lk_0}{\alpha_{l}} \right\} $.  Therefore, we have $ C^{-1} \| z \|_V^2  \leq  \sigma_2(z,z) $ for all $ z \in V $, i.e., $ \sigma_2 $ is $ V $-elliptic.

It can also be shown that $ \sigma_2 $ is $ V $-elliptic in the similar case when $\gamma,\alpha_{0}>0$ and $\alpha,\alpha_{l}=0$.

\begin{description}
\item [{Example 2}] $\gamma=0$ and $\alpha,\alpha_{0},\alpha_{l}>0$
\end{description}
Next, we consider the case of viscous damping in the wave equation and both mass-spring systems ($ \alpha, \alpha_0, \alpha_l > 0 $).  Let $ z = [w,w_0,w_l] \in V $.  Since
\begin{align*}
\sigma_{2}(z,z)= & \int_{0}^{l}\alpha w^{2}dx+\alpha_{0}w_{0}^{2}+\alpha_{l}w_{l}^{2},\\
\left\Vert z\right\Vert _{H}^{2}= & \int_{0}^{l}w^{2}dx+m_{0}w_{0}^{2}+m_{l}w_{l}^{2},
\end{align*}
it is clear that $\sigma_{2}$ is $H$-elliptic in this case.
\begin{description}
\item [{Example 3}] $\gamma,\alpha>0$ and $\alpha_{0},\alpha_{l}=0$
\end{description}
In this last case, we consider both Kelvin-Voigt and viscous damping in the interior, but no damping in either boundary.  We prove $ \sigma_2 $ is $ V $-elliptic.

We rewrite the bilinear form of $\sigma_{2}$ and the $V$ inner products
according to the above parameters:
\begin{align*}
\sigma_{2}(z,z) &= \int_{0}^{l}(\gamma w_{x}^{2}+\alpha w^{2})\,dx,\\
\left\Vert z\right\Vert _{V}^{2} &= \int_{0}^{l}\beta^{2}w_{x}^{2} \, dx+k_{0}w_{0}^{2}+k_{l}w_{l}^{2}.
\end{align*}
Recall $ L^\infty(0,l) $ is continuously embedded in $ H^1(0,l) $, and so there exists a constant $ C > 0 $ such that
$$
  \| w \|^2_{L^\infty(0,l)}  \leq  C \| w \|_{H^1(0,l)}^2 = C \int_{0}^{l}(w^{2}+w_{x}^{2})\,dx.
$$
Therefore, since $ w_0 = w(0) $ and $ w_l = w(l) $,
\begin{align*}
  \left\Vert z\right\Vert _{V}^{2} &\leq \int_{0}^{l}\beta^{2}w_{x}^{2}\,dx+k_{0}\left\Vert w\right\Vert _{L^\infty}^{2}+k_{l}\left\Vert w\right\Vert _{L^\infty}^{2}\\
 %& =\int_{0}^{l}\beta^{2}w_{x}^{2}dx+(k_{0}+k_{l})\left\Vert w\right\Vert _{\infty}^{2}\\
 %&\leq\int_{0}^{l}\beta^{2}w_{x}^{2}dx+(k_{0}+k_{l})C\left\Vert w\right\Vert _{H^{1}(0,l)}^{2}\\
 &\leq \int_{0}^{l}\beta^{2}w_{x}^{2}\,dx+(k_{0}+k_{l})C\int_{0}^{l}(w^{2}+w_{x}^{2})\,dx\\
 % &= \int_{0}^{l}\big((\beta^{2}+(k_{0}+k_{l})C)w_{x}^{2}+(k_{0}+k_{l})Cw^{2}\big)\,dx\\
 &\leq C_{3}\,\sigma_{2}(z,z),
\end{align*}
where
$$
  C_{3} = \max\left\{ \frac{\beta^2+(k_{0}+k_{l})C}{\gamma}, \frac{(k_{0}+k_{l})C}{\alpha}\right\}.
$$
Therefore, $ C_3^{-1} \left\Vert z\right\Vert_{V}^{2} \leq \sigma_{2}(z,z) $, i.e., $ \sigma_2 $ is $ V $-elliptic.
%

%------------------------------------------------------------------------------------------------
\section{The Nonlinear Problem}
\label{sec:nonlinear_problem}
%------------------------------------------------------------------------------------------------

%
Next, we analyze the well-posedness and exponential stability of the full unforced nonlinear problem.  First, we write the nonlinear problem as
\begin{equation}
  \dot{x}(t)=\mathcal{A}x(t)+\mathcal{F}(x(t)),  \quad  x(0) = x_0,\label{eq:nonlinear problem}
\end{equation}
on $\mathcal{H}=V\times H$ where the linear operator $\mathcal{A}$ is defined in Section \ref{subsec:linear_theory} and the nonlinear term $\mathcal{F}\,:\,\mathcal{H}\rightarrow\mathcal{H}$
is defined for $ x = [ \varphi, \psi ] \in \mathcal{H} $ with $ \varphi = [w,w_0,w_l] \in V $ by
\[
\mathcal{F}(x)=\left[\begin{array}{c}
0\\
F_{0}(\varphi)
\end{array}\right],  \quad  F_{0}(\varphi)=\left[\begin{array}{c}
0\\
0\\
m_{l}^{-1}k_{3} w_{l}^{3}
%k_{3} w_{l}^{3}
\end{array}\right].
\]

\begin{theorem}
The nonlinear cable mass system has unique mild solution on some time interval $ [0,t^*) $.\end{theorem}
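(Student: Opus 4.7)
The plan is to treat \eqref{eq:nonlinear problem} as a semilinear abstract Cauchy problem and invoke the standard local existence theory (e.g., Pazy, Theorem 6.1.4) for mild solutions of $\dot{x} = \mathcal{A}x + \mathcal{F}(x)$ where $\mathcal{A}$ generates a $C_0$-semigroup and $\mathcal{F}$ is locally Lipschitz on $\mathcal{H}$. Theorem \ref{thm:A generates c0 semigroup} already supplies the semigroup $T(t)$ on $\mathcal{H}$, so by the variation of parameters formula a mild solution is a continuous $x:[0,t^*)\to\mathcal{H}$ satisfying
\begin{equation*}
  x(t) = T(t)x_0 + \int_0^t T(t-s)\,\mathcal{F}(x(s))\,ds.
\end{equation*}
The whole proof therefore reduces to verifying that $\mathcal{F}$ is locally Lipschitz from $\mathcal{H}$ into $\mathcal{H}$.

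To do this, I first observe that for $x=[\varphi,\psi]\in\mathcal{H}$ with $\varphi=[w,w_0,w_l]\in V$, only the third component of the second block of $\mathcal{F}(x)$ is nonzero, so
\begin{equation*}
  \|\mathcal{F}(x)\|_{\mathcal{H}}^2 = \|F_0(\varphi)\|_H^2 = m_l\bigl(m_l^{-1}k_3 w_l^3\bigr)^2 = m_l^{-1}k_3^2\,w_l^6.
\end{equation*}
Since $k_l w_l^2 \le \|\varphi\|_V^2 \le \|x\|_{\mathcal{H}}^2$, the trace value $w_l$ is directly controlled by $\|x\|_{\mathcal{H}}$. This is the key estimate, and it is essentially free because $w_l$ appears explicitly in the $V$ inner product \eqref{eq:vnorm-1}; no Sobolev trace inequality is required.

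Next, for $x=[\varphi,\psi]$ and $\tilde x=[\tilde\varphi,\tilde\psi]$ in a ball $B_R=\{\|x\|_{\mathcal{H}}\le R\}$, the elementary identity $a^3-b^3=(a-b)(a^2+ab+b^2)$ gives
\begin{equation*}
  |w_l^3 - \tilde w_l^3| \le \bigl(|w_l|^2 + |w_l||\tilde w_l| + |\tilde w_l|^2\bigr)|w_l - \tilde w_l| \le 3\,k_l^{-1}R^2\,|w_l - \tilde w_l|.
\end{equation*}
Using $|w_l - \tilde w_l|^2 \le k_l^{-1}\|\varphi-\tilde\varphi\|_V^2 \le k_l^{-1}\|x-\tilde x\|_{\mathcal{H}}^2$ and combining with the $\|\mathcal{F}(x)\|_{\mathcal{H}}$ computation above yields
\begin{equation*}
  \|\mathcal{F}(x) - \mathcal{F}(\tilde x)\|_{\mathcal{H}} \le L(R)\,\|x-\tilde x\|_{\mathcal{H}},
\end{equation*}
for a constant $L(R)$ depending only on $R$ and the physical parameters. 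Thus $\mathcal{F}$ is locally Lipschitz on $\mathcal{H}$, and the standard semilinear existence theorem produces a unique mild solution on a maximal interval $[0,t^*)$.

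I do not anticipate any real obstacle here: the nonlinearity $w_l^3$ acts pointwise on a scalar coordinate that is genuinely part of the state vector, so the usual difficulty of estimating a nonlinear trace term against a weak norm is absent. The only mild care needed is to notice that $\mathcal{F}$ maps into all of $\mathcal{H}=V\times H$ (its second component lies in $H$), which is what makes the abstract mild solution framework applicable directly, without passage to $V'$ or to a fractional power domain of $\mathcal{A}$.
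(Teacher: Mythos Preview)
Your proposal is correct and follows exactly the approach of the paper: the paper simply states that $\mathcal{F}$ is locally Lipschitz on $\mathcal{H}$ and then invokes Pazy's Theorem 6.1.4, whereas you have supplied the explicit Lipschitz estimate (using $k_l w_l^2 \le \|\varphi\|_V^2$) that the paper leaves to the reader. There is no gap and no substantive difference in strategy.
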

\begin{proof}
It can be checked that the nonlinear term $\mathcal{F}$ is locally Lipschitz continuous
on $\mathcal{H}$.  Therefore, the result follows using semigroup theory; see, e.g., \cite[Theorem 1.4 in section 6.1]{pazy2012semigroups}.
\end{proof}

%------------------------------------------------------------------------------------------------
%\subsection{Exponential Stability}
%------------------------------------------------------------------------------------------------

Next, we prove the unforced nonlinear system is exponential stability when the damping bilinear form $\sigma_{2}$ is $H$-elliptic.  We gave examples of damping parameters that guarantee $ \sigma_2 $ is either $ H $-elliptic or $ V $-elliptic in the previous section.  Recall also that if $ \sigma_2 $ is $ V $-elliptic, then it is also $ H $-elliptic.  For the proof, we use the energy argument from Section \ref{subsec:energy}, the variational formulation in Section \ref{subsec:var_form}, and also the following lemma.

\begin{lemma}[Theorem 8.1 in \cite{komornik1994exact}]\label{thm:exponential stability of nonlinear system}
Let $E\,:\,\mathbb{R}^{+}\rightarrow\mathbb{R}^{+}$ be a non-increasing function.  If there exists a constant
$T>0$ such that $\int_{s}^{\infty}E(t)\,\leq TE(s) $ for all $ s \geq 0 $, then
$E(t)\leq E(0)e^{1-t/T}$ for all $t\geq0$.
\end{lemma}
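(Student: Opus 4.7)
The plan is to reduce the result to a linear differential inequality for the tail integral of $E$, solve it by a Gronwall-type argument, and then use monotonicity of $E$ to translate a bound on the integral into a pointwise bound on $E$ itself. Define
$$ f(s) = \int_s^\infty E(t)\,dt, \qquad s \geq 0. $$
The hypothesis applied at $s=0$ gives $f(0) \leq T E(0) < \infty$, so $E \in L^1(0,\infty)$ and $f$ is well-defined, nonnegative, and non-increasing.

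Since $E \in L^1$, the tail integral $f$ is absolutely continuous on $[0,\infty)$ with $f'(s) = -E(s)$ for almost every $s \geq 0$. The hypothesis $f(s) \leq T E(s)$ then gives the a.e.\ differential inequality
$$ f'(s) + \frac{1}{T} f(s) \leq 0. $$
Multiplying by the integrating factor $e^{s/T}$, the function $s \mapsto e^{s/T} f(s)$ is absolutely continuous with a.e.\ nonpositive derivative, hence non-increasing. This yields
$$ f(s) \leq f(0) \, e^{-s/T} \leq T E(0) \, e^{-s/T}, \qquad s \geq 0. $$

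Now I convert this into a pointwise bound on $E$. Because $E$ is non-increasing, for any $s \geq 0$,
$$ f(s) = \int_s^\infty E(t)\,dt \geq \int_s^{s+T} E(t)\,dt \geq T\, E(s+T). $$
Combining with the previous inequality gives $E(s+T) \leq E(0)\, e^{-s/T}$. Setting $t = s+T$ for $t \geq T$ produces
$$ E(t) \leq E(0)\, e^{-(t-T)/T} = E(0)\, e^{1 - t/T}. $$
For the remaining range $0 \leq t \leq T$, monotonicity gives $E(t) \leq E(0)$, and since $1 - t/T \geq 0$ we have $E(0) \leq E(0)\, e^{1-t/T}$, so the same bound holds.

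The only mildly technical step is the justification that $f$ is differentiable almost everywhere with $f' = -E$, which I expect to be the most delicate point since $E$ is only assumed non-increasing (hence measurable and a.e.\ continuous) rather than continuous; the absolute continuity of $f$ coming from $E \in L^1$ is what makes the Gronwall-style integration of the differential inequality rigorous. Everything else is a direct manipulation of the hypothesis combined with monotonicity of $E$.
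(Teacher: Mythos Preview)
Your proof is correct and is essentially the classical argument found in Komornik's book: introduce the tail integral $f(s)=\int_s^\infty E$, derive the differential inequality $f'+T^{-1}f\le 0$ from the hypothesis, integrate with the factor $e^{s/T}$, and then recover the pointwise bound on $E$ from monotonicity via $f(s)\ge T\,E(s+T)$. The paper does not give its own proof of this lemma; it simply cites it as Theorem~8.1 in Komornik and uses it as a tool in the subsequent energy-decay theorem. So there is nothing to compare against in the paper itself, and your write-up supplies exactly the standard justification. The one technical remark you flagged---that $f$ is absolutely continuous with $f'=-E$ a.e.\ because $E\in L^1$---is the right thing to highlight and is handled correctly.
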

\begin{theorem}
If $\sigma_{2}$ is $H-$elliptic and the solution $ x = [z, z_t] $, with $ z = [w,w_0,w_l] $, of the unforced nonlinear cable-mass problem (\ref{eq:nonlinear problem}) is sufficiently smooth, then the energy $E(t)=\frac{1}{2}\left\Vert z_{t}\right\Vert _{H}^{2}+\frac{1}{2}\left\Vert z\right\Vert _{V}^{2}+\frac{k_{3}}{4}\left[w_{l}(t)\right]^{4}$
of the solution with initial data $ x(0) = x_0 \in \mathcal{H} $ decays exponentially fast as $t\rightarrow\infty.$
\end{theorem}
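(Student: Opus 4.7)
The plan is to combine the energy identity from Section \ref{subsec:energy} with a multiplier-type estimate and then invoke Lemma \ref{thm:exponential stability of nonlinear system} (the Komornik integral criterion). First I would verify that $E$ is non-increasing: under the smoothness assumption the computation in Section \ref{subsec:energy} goes through verbatim for the nonlinear system and gives
\[
\dot E(t) = -\sigma_2\bigl(z_t(t), z_t(t)\bigr) \leq 0,
\]
since the nonlinear boundary contribution $k_3 w_l^3 \dot w_l$ is exactly what the $\tfrac{k_3}{4} w_l^4$ term in $E$ was designed to absorb. Integrating this from $s$ to $T$ gives $\int_s^T \sigma_2(z_t, z_t)\,dt = E(s) - E(T) \leq E(s)$, which will be the source of all subsequent bounds.

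Next I would derive a multiplier identity by setting $\psi = z(t)$ as test function in the variational form \eqref{eq:variation formula full}. Since $\sigma_1(z,z) = \|z\|_V^2$, $(f(z), z)_H = k_3 w_l^4$, and $\langle z_{tt}, z\rangle = \tfrac{d}{dt}(z_t, z)_H - \|z_t\|_H^2$, integrating from $s$ to $T$ yields
\[
\int_s^T \|z\|_V^2\,dt + k_3 \int_s^T w_l^4\,dt = \int_s^T \|z_t\|_H^2\,dt - \bigl[(z_t, z)_H\bigr]_s^T - \int_s^T \sigma_2(z_t, z)\,dt.
\]
Substituting this into the definition of $E$ and dropping the nonnegative term $\tfrac{k_3}{4}\int_s^T w_l^4\,dt$ from the left gives
\[
\int_s^T E(t)\,dt \leq \int_s^T \|z_t\|_H^2\,dt + \tfrac{1}{2}\bigl|\bigl[(z_t,z)_H\bigr]_s^T\bigr| + \tfrac{1}{2}\int_s^T |\sigma_2(z_t, z)|\,dt,
\]
so the nonlinearity actually works in our favor rather than complicating the argument.

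It remains to bound each of the three terms on the right by a multiple of $E(s)$. The $H$-ellipticity of $\sigma_2$ immediately gives $\int_s^T \|z_t\|_H^2\,dt \leq c_3^{-1} E(s)$. For the boundary term, Cauchy-Schwarz and the continuous embedding $V\hookrightarrow H$ from Lemma \ref{Lemma V cts embedded} yield $|(z_t(t), z(t))_H| \leq C E(t)$, and since $E$ is non-increasing this is controlled by $C E(s)$ at both endpoints. For the cross term I would use the fact that $\sigma_2$ is symmetric and positive semidefinite (Cauchy-Schwarz for such forms) together with its $V$-continuity to obtain
\[
\int_s^T |\sigma_2(z_t, z)|\,dt \leq \Bigl(\int_s^T \sigma_2(z_t, z_t)\,dt\Bigr)^{1/2}\Bigl(\int_s^T \sigma_2(z, z)\,dt\Bigr)^{1/2} \leq C'\sqrt{E(s)}\,\Bigl(\int_s^T E(t)\,dt\Bigr)^{1/2}.
\]

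The main obstacle is that this last estimate feeds $\int_s^T E(t)\,dt$ back into the right-hand side of the inequality for $\int_s^T E(t)\,dt$ itself, producing a circular bound. I would resolve this with Young's inequality: writing $\sqrt{E(s)\cdot I} \leq \tfrac{1}{2\epsilon}E(s) + \tfrac{\epsilon}{2}I$ with $I = \int_s^T E(t)\,dt$ and choosing $\epsilon$ small enough to absorb the $I$ contribution into the left-hand side, one arrives at $I \leq T_0 E(s)$ for a constant $T_0$ independent of $s$ and $T$. Passing to $T\to\infty$ by monotone convergence gives $\int_s^\infty E(t)\,dt \leq T_0 E(s)$ for every $s\geq 0$, so Lemma \ref{thm:exponential stability of nonlinear system} delivers $E(t) \leq E(0)\,e^{1 - t/T_0}$. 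The smoothness hypothesis is only used to justify the formal time differentiation and the choice of $z$ as test function; the general case would follow by a standard density argument using continuous dependence of the mild solution on the initial data.
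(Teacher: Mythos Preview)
Your argument is correct and reaches the same conclusion via Lemma \ref{thm:exponential stability of nonlinear system}, but the handling of the cross term $\int_s^T \sigma_2(z_t,z)\,dt$ differs from the paper's. The paper exploits the symmetry of $\sigma_2$ in the form $\sigma_2(z_t,z)=\tfrac{1}{2}\tfrac{d}{dt}\sigma_2(z,z)$, so after integrating in time the cross term collapses to the boundary contributions $\tfrac{1}{2}\sigma_2(z(t),z(t))-\tfrac{1}{2}\sigma_2(z(s),z(s))$, which are bounded directly by $E(s)$ using $V$-continuity and the monotonicity of $E$; no feedback or absorption is required. You instead keep the cross term as an integral, apply Cauchy--Schwarz for the semidefinite form and then in time, and close the resulting circular estimate with Young's inequality. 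The paper's route is a bit shorter and sidesteps the absorption step entirely; your route is equally valid and would continue to work if the damping term were replaced by one that is not an exact time derivative. The paper also splits the argument into separate kinetic and potential energy steps, whereas you estimate $\int_s^T E$ in one pass---a purely organizational difference.
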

\begin{remark}
  It may be possible to identify conditions on the damping parameters or the initial data $ x_0 \in \mathcal{H} $ that provide the required smoothness of the solution for the proof of this exponential stability result.  We leave this to be considered elsewhere.
\end{remark}
\begin{proof}
First, since the solution is sufficiently smooth, the energy argument from Section \ref{subsec:energy} gives $ E'(t) \leq 0 $, where
$$
  E(t) = E_K(t) + E_P(t),  \quad  E_K = \frac{1}{2} \| z_t \|_H^2,  \quad  E_P = \frac{1}{2} \| z \|_V^2 + \frac{k_{3}}{4} \left[ w_{l} \right]^{4}.
$$
Therefore, $ E(t) $ is non-increasing.  Also, $ \frac{1}{2} \| x \|_\mathcal{H}^2  =  \frac{1}{2} \| z \|_V^2 + \frac{1}{2} \| z_t \|_H^2  \leq  E(t) $.  Since $ E(t) $ is bounded, $ \| x(t) \|_\mathcal{H}^2 $ cannot blow up in finite time; therefore, semigroup theory gives that the solution must exist for all $ t > 0 $ \cite[Theorem 1.4 in section 6.1]{pazy2012semigroups}.

Next, we show the energy function satisfies the remaining condition in the above lemma by separately considering the kinetic and potential energies.
Our proof uses ideas from the proof of Theorem 3.2 in Fourrier and Lasiecka's work \cite{fourrier2013regularity}.  To use the above lemma, let $ s \geq 0 $ and $ t > s $.

\textbf{Step 1:} First we consider the kinetic energy.  Recalling the energy argument from Section \ref{subsec:energy} immediately gives
\[
  E'(t)=-\sigma_{2}(z_{t},z_{t}).
\]
Integrate with respect to time from $s$ to $t$ to obtain
\[
  E(t)=E(s)-\int_{s}^{t}\sigma_{2}(z_{t},z_{t}) \, d\tau.
\]
Since $ \sigma_2 $ is $ H $-elliptic, there is a constant $ C > 0 $ such that $ \sigma_{2}(z_{t},z_{t}) \geq (C/2) \| z_t \|_H^2 = C E_K $.  Therefore, for $ C_1 = C^{-1} $,
$$
  \int_{s}^{t}E_{K}(\tau)\,d\tau \leq C_1 E(s) - C_1 E(t)  \leq  C_1 E(s),
$$
since $ E(t) \geq 0 $.

\textbf{Step 2:} Next, we consider the potential energy.  Substitute $ \psi = z = [w,w_0,w_l] $ in the variational formulation (\ref{eq:variation formula full}) to give
\[
  (z_{tt},z)_{H} + (z,z)_V+\sigma_{2}(z_{t},z) + \left(f(z),z\right)_{H}=0.
\]
Since $ \sigma_2 $ is a symmetric bilinear form, we have $ \sigma_{2}(z_{t},z) = \frac{1}{2} \frac{d}{dt} \sigma_2(z,z) $.  Integrate with respect to time from $ s $ to $ t $, and then integrate by parts in time to obtain
\begin{gather*}
(z_{t}(t),z(t))_{H}-(z_{t}(s),z(s))_{H}-\int_{s}^{t}\left(z_{t},z_{t}\right)_{H}d\tau+\int_{s}^{t}(z,z)_{V}d\tau\\
+\frac{1}{2} \sigma_{2}(z(t),z(t)) - \frac{1}{2} \sigma_{2}(z(s),z(s))+\int_{s}^{t}k_{3}(w_{l}(t))^{4}d\tau = 0.
\end{gather*}
Using the definition of kinetic energy and potential energy gives
\begin{gather*}
\frac{1}{2}\sigma_{2}(z(t),z(t))+2\int_{s}^{t}E_{P}(\tau)d\tau + \int_{s}^{t}\frac{k_{3}}{2}\left[w_l(t)\right]^{4}d\tau - 2\int_{s}^{t}E_{K}(\tau)d\tau\\
 +(z_{t}(t),z(t))_{H}-(z_{t}(s),z(s))_{H}-\frac{1}{2}\sigma_{2}(z(s),z(s)) = 0.
\end{gather*}
We remove the nonnegative term $\int_{s}^{t}\frac{k_{3}}{2}\left[w_l(t)\right]^{4}d\tau \geq 0$, and the equality becomes the inequality
\begin{align*}
\frac{1}{2}\sigma_{2}(z(t),z(t))+2\int_{s}^{t}E_{P}(\tau)d\tau &\leq 2\int_{s}^{t}E_{K}(\tau)d\tau-(z_{t}(t),z(t))_{H}\\
 & \quad +(z_{t}(s),z(s))_{H}+\frac{1}{2}\sigma_{2}(z(s),z(s)).
\end{align*}
Use $(u,v)_{H}\leq\left\Vert u\right\Vert _{H}\left\Vert v\right\Vert _{H}$ and the $ V $-continuity of $\sigma_{2}$ to obtain
\begin{align*}
  &\frac{1}{4}\sigma_{2}(z(t),z(t))+\int_{s}^{t}E_{P}(\tau)d\tau\\
  &\quad  \leq \int_{s}^{t}E_{K}(\tau)d\tau + \frac{1}{2} \left[\left\Vert z_{t}(s)\right\Vert _{H}\left\Vert z(s)\right\Vert _{H}+\left\Vert z_{t}(t)\right\Vert _{H}\left\Vert z(t)\right\Vert _{H}+\frac{C_{2}}{2}\left\Vert z(s)\right\Vert _{V}^{2}\right].
\end{align*}
Next, use the result from Step 1, Young's inequality, and the continuous embedding $ V \hookrightarrow H $ to obtain
\begin{align*}
  & \frac{1}{4}\sigma_{2}(z(t),z(t))+\int_{s}^{t}E_{P}(\tau)d\tau\\
  & \quad \leq C_{1} E(s) + \frac{1}{2} \left[\frac{1}{2}\left\Vert z_{t}(s)\right\Vert_{H}^{2} + \frac{1}{2}\big( C_{3} + C_2 \big) \left\Vert z(s)\right\Vert_{V}^{2} \right]\\
  &  \qquad  + \frac{1}{2} \left[ \frac{1}{2}\left\Vert z_{t}(t)\right\Vert _{H}^{2} + \frac{C_{3}}{2}\left\Vert z(t)\right\Vert _{V}^{2} \right]\\
  & \quad \leq C_{1} E(s) + \frac{1}{2} \left[\frac{1}{2}\left\Vert z_{t}(s)\right\Vert_{H}^{2} + \frac{1}{2}\big( C_{3} + C_2 \big) \left\Vert z(s)\right\Vert_{V}^{2} + \frac{k_{3}}{4}\left[w_l(s)\right]^{4} \right]\\
  &  \qquad  + \frac{1}{2} \left[ \frac{1}{2}\left\Vert z_{t}(t)\right\Vert _{H}^{2} + \frac{C_{3}}{2}\left\Vert z(t)\right\Vert _{V}^{2} + \frac{k_{3}}{4}\left[w_l(t)\right]^{4} \right]\\
  &  \quad  \leq  C_{1} E(s) + C_4 E(s) + C_5 E(t),
\end{align*}
where
$$
  C_4 = \frac{1}{2} \max\{ C_3 + C_2, 1 \},  \quad  C_5 = \frac{1}{2} \max\{ C_3, 1 \}.
$$
Since $ t > s $ and $ E'(t) \leq 0 $ for all $ t \geq 0 $, we have $ E(t) \leq E(s) $.  This gives
$$
  \int_{s}^{t}E_{P}(\tau)d\tau  \leq  C_6 E(s),  \quad  C_6 = C_1 + C_4 + C_5.
$$
Combining this result with the result of Step 1, letting $ t \to \infty $, and using Lemma \ref{thm:exponential stability of nonlinear system} proves the result.
\end{proof}

%------------------------------------------------------------------------------------------------
\section{Balanced Truncation Model Reduction}
\label{sec:BT_MOR}
%------------------------------------------------------------------------------------------------

Next, we return to the forced nonlinear cable-mass system (\ref{eq:(1)}) with the system input $ u(t) $ in the dynamic boundary condition (\ref{eq:(1b)}) and system output
$$
  y(t) = [ w_l(t), \dot{w}_l(t) ]^T
$$
of the position and velocity of the right mass.  In this section, we describe a balanced truncation model reduction approach for this nonlinear system.  We begin by briefly reviewing balanced truncation model reduction for linear input-output ordinary differential equation systems and then infinite dimensional systems.  We outline the finite difference method we use to approximate the nonlinear cable-mass model, and then describe the balanced truncation model reduction method for the approximating nonlinear finite dimensional system.

%------------------------------------------------------------------------------------------------
\subsection{Finite Dimensional Balanced Truncation Theory}
%------------------------------------------------------------------------------------------------

Balanced truncation is one of the most popular model reduction
methods in control and systems theory, and it is based on the idea of controllability
and observability \cite{Antoulas05, ZhouDoyleGlover96}.  To review the main ideas, consider the exponentially stable linear time invariant dynamical
system in state space form
\begin{align}
\dot{x}(t) &= Ax(t)+Bu(t),\nonumber \\
y(t) &= Cx(t),\label{eq:first order system-1}
\end{align}
with $x(t) \in \mathbb{R}^{N}$ is the state, $u(t)\in\mathbb{R}^{m}$ is the input, and $y(t)\in\mathbb{R}^{p}$ is the output. Moreover,  $A \in \mathbb{R}^{N\times N}$, $B \in \mathbb{R}^{N\times m}$, and $C\in \mathbb{R}^{p\times N}$ are constant matrices, and $ A $ is stable.

To reduce the complexity of the system, we approximate the problem using a reduced number of states $ r \ll N $.  Balanced truncation produces a reduced order model
\begin{align}
\dot{a}(t) &= A_r a(t)+B_r u(t),\nonumber \\
y_r(t) &= C_r a(t),\label{eq:ROM-1}
\end{align}
where $a(t) \in \mathbb{R}^{r}$ is the reduced order state, such that the error in the output $ \| y(t) - y_r(t) \| $ is small when the same input $ u(t) $ is applied to both systems.

To do this, let $ T \in \mathbb{R}^{N\times N} $ be invertible, and make the change of variable $z=Tx$.  Then we can write the original system (\ref{eq:first order system-1}) as
\begin{align*}
\dot{z}(t) &= T^{-1}ATz(t)+T^{-1}Bu(t),\\
y(t) &= CTz(t).
\end{align*}
It can be checked that the transfer function $ G(s) = C (s I - A)^{-1} B $ relating inputs to outputs in the original system is equal to the transfer function of the transformed system.

Since $ A $ is stable, the controllability and observability Gramians, $P, Q \in \mathbb{R}^{N \times N}$, are the unique positive semidefinite solutions to the Lyapunov equations $AP+PA^{T}+BB^{T}=0$ and $A^{T}Q+QA+C^{T}C=0$.  It can be checked that the Gramians of the transformed system are given by $\hat{P}=TPT^{T}$ and $\hat{Q}=\left(T^{-1}\right)^{T}QT^{-1}$.  Furthermore, if $ P $ and $ Q $ are positive definite, there exists $ T $ such that transformed Gramians $ \hat{P} $ and $ \hat{Q} $ are \textit{balanced}, i.e., they are equal and diagonal; the positive diagonal entries are called the Hankel singular values of the system, and they are ordered from greatest to least.

The states in the transformed system corresponding to small Hankel singular values are truncated to produce the balanced low order model.  In addition, the truncation error between the transfer function $ G(s) $ of the original system and the transfer function $ G_r(s) = C_r (s I - A_r)^{-1} B_r $ of the balanced low order model can be bounded by
\begin{equation}\label{eqn:BT_error_bound}
  \left\Vert G(s)-G_{r}(s)\right\Vert_{\infty} \leq 2\sum_{i > r} \sigma_{i},
\end{equation}
where $ \{ \sigma_{i} \}_{i=1}^N $ are the ordered Hankel singular values of the system and the norm is the $ \mathcal{H}_\infty $ system norm.  Therefore, if the Hankel singular values decay to zero quickly, then the balanced low order model can provide a good approximation to the input-output response of the full order system.
%

%------------------------------------------------------------------------------------------------
\subsection{Infinite Dimensional Balance Truncation Theory}
%------------------------------------------------------------------------------------------------

Since we consider a partial differential equation system in this work, we briefly review balanced truncation model reduction for linear infinite dimensional systems of the form
\begin{align}
\dot{x}(t) &= \mathcal{A} x(t)+\mathcal{B} u(t),\nonumber \\
y(t) &= \mathcal{C}x(t),\label{eqn:inf_dim_linear_sys}
\end{align}
holding over a Hilbert space $ X $, where $ \mathcal{A} : D(\mathcal{A}) \subset  X \to X $ is the generator of an exponentially stable $ C_0 $-semigroup on $ X $, and $ \mathcal{B} : \mathbb{R}^m \to X $ and $ \mathcal{C} : X \to \mathbb{R}^p $ are both bounded linear operators.  We also verify the theory holds for the linear cable-mass system.

The theoretical background for the existence of the balanced truncation for this class of infinite dimensional linear systems can be found in \cite{curtain2001compactness, glover1988realisation}.  Specifically, there is a transformed system holding over the Hilbert space $ \ell^2 $ that is balanced, i.e., the controllability and observability Gramians are equal and diagonal.  Also, as in the finite dimensional case, the diagonal entries are called the Hankel singular values and are ordered from greatest to least.  Truncating the states in the transformed system corresponding to small Hankel singular values again yields the reduced order model.  The transfer function error bound (\ref{eqn:BT_error_bound}) still holds, and the right hand side of the error bound is finite and tends to zero as $ r $ increases.

We can write our linear cable-mass system in the above first order abstract form (\ref{eqn:inf_dim_linear_sys}) with Hilbert space $ X = \mathcal{H} = V \times H $, as in Section \ref{subsec:linear_theory}.  The operator $ \mathcal{A} $ was defined previously.  The operators $ \mathcal{B}:\mathbb{R}\rightarrow \mathcal{H}$ and $ \mathcal{C} : \mathcal{H} \to \mathbb{R}^2 $ are defined as follows. First, $\mathcal{B}u=\left[0, B_{0}u\right]$, where $B_{0}u=\left[0,u,0\right]$.  Then let $x=[z, \chi] \in \mathcal{H}$, where $z$
is the position and $\chi$ is the velocity.  For $z=[w,w_{0},w_{l}]$ and $\chi=[p,\,p_{0},\,p_{l}]$, $ \mathcal{C} x = [ w_l, p_l ]^T $.  It can be checked that $ \mathcal{B} $ and $ \mathcal{C} $ are bounded, and therefore the balanced truncation theory holds for the linear cable-mass system.

We note that verifying the balanced truncation theory for PDE systems with inputs and/or outputs on the boundary of the spatial domain can often be very challenging \cite{curtain2001compactness, guiver2014model, Opmeer08} since the operators $ \mathcal{B} $ and/or $ \mathcal{C} $ are no longer bounded.  However, in our case, the input and output appearing in the boundaries with the dynamic boundary conditions cause the operators $ \mathcal{B} $ and $ \mathcal{C} $ to be bounded, and so we avoid the additional difficulty.

%------------------------------------------------------------------------------------------------
\subsection{Formulating the Finite Difference Approximation}
%------------------------------------------------------------------------------------------------

Finding exact solutions of the nonlinear cable-mass problem is
usually impossible. Therefore we use a basic numerical method, the finite
difference method, to approximate the solution to our model problem
with dynamic boundary conditions. Using this method we approximate our PDE system
by a large ODE system, and we apply the model reduction method to the resulting nonlinear finite dimensional system.

We place $n$ equally spaced nodes $\{ x_{j}\} _{j=1}^{n}$
in the interval $\left[0, l\right]$, where $x_{j}=(j-1)h$ and $h=l/(n-1)$
so that $x_{1}=0$ and $x_{n}=l$. In order to apply balanced truncation
below, we also eliminate the second order time derivatives by introducing a
velocity variable. Therefore, let $d_{i}$ denote the finite difference
approximation to the displacement $w(t,x_{i})$, and let $v_{i}$
denote the finite difference approximation to the velocity $w_{t}(t,x_{i})$.  We assume the solution is smooth so that the displacement
and velocity compatibility conditions are satisfied; we obtain
\begin{alignat*}{1}
w_{0}(t)=d_{1}(t), & \,\,\,\,\,\,w_{l}(t)=d_{n}(t),\\
\dot{w}_{0}(t)=v_{1}(t), & \,\,\,\,\,\,\dot{w}_{l}(t)=v_{n}(t).
\end{alignat*}
We use second order centered differences to form finite difference
equations for the wave equation (\ref{eq:(1a)})
\begin{align}
v_{i}' &= \frac{\gamma}{h^{2}}[v_{i+1}-2v_{i}+v_{i-1}]+\frac{\beta^{2}}{h^{2}}[d_{i+1}-2d_{i}+d_{i-1}]-\alpha v_{i},\nonumber \\
d_{i}' &= v_{i},  \quad  \mbox{for $i=2,\dots,n-1 $}.\label{eq:discritization terms}
\end{align}
To discretize our system we use (\ref{eq:discritization terms}) to
obtain
\begin{align*}
v_{i}' &= \left[-\alpha-\frac{2\gamma}{h^{2}}\right]v_{i}+\left[\frac{\gamma}{h^{2}}\right]v_{i-1}+\left[\frac{\gamma}{h^{2}}\right]v_{i+1}+\left[\frac{\beta^{2}}{h^{2}}\right]d_{i+1}-\left[\frac{2\beta^{2}}{h^{2}}\right]d_{i}+\left[\frac{\beta^{2}}{h^{2}}\right]d_{i-1},\\
d_{i}' &= v_{i},  \quad  \mbox{for $i=2,\dots,n-1 $}.
\end{align*}

To discritize the dynamic boundary conditions we use second order accurate
one-sided finite difference approximation to the first order spatial
derivatives, i.e.,
\begin{align*}
w_{x}(t,x)&\approx\frac{-3w(t,x)+4w(t,x+h)-w(t,x+2h)}{2h},\\
w_{x}(t,x)&\approx\frac{3w(t,x)-4w(t,x-h)+w(t,x-2h)}{2h}.
\end{align*}
Using these approximations we discretize $w_x(t,0)$ in left boundary
condition and $w_x(t,l)$ in right boundary
condition by
\begin{align*}
w_x(t,0) &\approx\frac{-3d_{1}+4d_{2}-d_{3}}{2h},\\
w_x(t,l) &\approx\frac{3d_{n}-4d_{n-1}+d_{n-2}}{2h}.
\end{align*}
Using these one-sided finite difference approximations allows us to keep the second order accuracy without introducing ``ghost'' nodes outside of the spatial domain.  After discretizing the dynamic boundary conditions we obtain
\begin{align*}
  v_{1}' &= \left[-\frac{k_{0}}{m_{0}}-\frac{3\beta^{2}}{2hm_{0}}\right]d_{1}+\left[\frac{4\beta^{2}}{2hm_{0}}\right]d_{2}-\left[\frac{\beta^{2}}{2hm_{0}}\right]d_{3}+\left[-\frac{3\gamma}{2hm_{0}}-\frac{\alpha_{0}}{m_{0}}\right]v_{1}\\
 & \quad +\left[\frac{4\gamma}{2hm_{0}}\right]v_{2}-\left[\frac{\gamma}{2hm_{0}}\right]v_{3}+\frac{u(t)}{m_{0}},\\
  d_{1}'&=v_{1},\\
  v_{n}' &= \left[-\frac{k_{l}}{m_{l}}-\frac{3\beta^{2}}{2hm_{l}}\right]d_{n}+\left[\frac{4\beta^{2}}{2hm_{l}}\right]d_{n-1}-\left[\frac{\beta^{2}}{2hm_{l}}\right]d_{n-2}\,\\
 & \quad +\left[-\frac{\alpha_{l}}{m_{l}}-\frac{3\gamma}{2hm_{l}}\right]v_{n}+\left[\frac{4\gamma}{2hm_{l}}\right]v_{n-1}-\left[\frac{\gamma}{2hm_{l}}\right]v_{n-2}-\left[\frac{k_{3}}{m_{l}}\right]\left[d_{n}\right]^{3},\\
  d_{n}'&=v_{n}.
\end{align*}

Then the matrix form of the above system becomes
\begin{align*}
\left[\begin{array}{c}
d'\\
v'
\end{array}\right] &= \left[\begin{array}{cc}
0 & I\\
A_{11} & A_{12}
\end{array}\right]\left[\begin{array}{c}
d\\
v
\end{array}\right]+%\left[\begin{array}{cc}
%0 & 0\\
%0 & D_{12}
%\end{array}\right]
\left[\begin{array}{c}
0\\
F_0(d)
\end{array}\right]+\left[\begin{array}{c}
0\\
B_{1}
\end{array}\right]u,\\
\left[\begin{array}{c}
y_{1}\\
y_{2}
\end{array}\right] &= \left[\begin{array}{ccccc}
0 & \cdots & 1 & \cdots & 0\\
0 & \cdots & 0 & \cdots & 1
\end{array}\right]\left[\begin{array}{c}
d\\
v
\end{array}\right],
\end{align*}
where
$$
  F_0(d) = [0, \ldots, 0, -m_l^{-1} k_3 d_n^3 ]^T.
$$
Or, we can write the nonlinear finite dimensional approximating system as
\begin{equation}\label{eqn:nonlinear_FOM}
  \dot{x} = A x + F(x) + B u,  \quad  y = C x.
\end{equation}
%

%------------------------------------------------------------------------------------------------
\subsection{Implementation of the Balanced Truncation Method}
%------------------------------------------------------------------------------------------------

We compute the balanced truncated reduced order model using the ``square
root algorithm'' described in \cite{Antoulas05}. The algorithm generate matrices $T_{r} \in R^{2n\times r}$
and $S_{r}  \in R^{r\times2n}$ such that $T_{r}=\left[\varphi_{1},\varphi_2,\ldots,\varphi_{r}\right]$,
where $\varphi_{j}$ denotes the $j$th column of $T_{r}$, and $S_{r}=\left[\psi_{1},\psi_2,\ldots,\psi_{r}\right]^{T}$,
where $\psi_{i}$ denotes the $i$th row of $S_{r}$. Also, $S_{r}T_{r}=I_{r}$, where $I_{r}$ is the identity matrix.

Approximate $ x(t) $ in the nonlinear full order model (\ref{eqn:nonlinear_FOM}) by $ x(t) \approx T_r a(t) $, and multiply the full order model on the left by $ S_r $ to produce the reduced order model
\[
  \dot{a}(t)=A_{r}a(t)+B_{r}u(t)+S_{r}\,F(T_{r}a),  \quad  y_{r}(t)=C_{r}a(t).
\]
The matrices $ A_r $, $ B_r $, and $ C_r $ in the reduced order model are given by $A_{r}=S_{r}\,A\,T_{r}$, $B_{r}=S_{r}\,B$, and $C_{r}=C\,T_{r}$.  These are exactly the same matrices from the balanced truncated reduced order model in the linear case.

We want to rewrite the nonlinear term $S_{r}\,F(T_{r}a)$ so that we can compute the reduced
order model using only low order operations. The $2n\times r$
matrix $T_{r}$ has $ ij $ entries $\varphi_{j,i}$, where $\varphi_{j,i}$ denotes the $ i $th entry of $ \varphi_j $, for $i=1,\ldots,2n$ and $j=1,\ldots,r$.
Also, $a=\left[a_{1},a_{2},\ldots,a_{r}\right]^{T}$ is a $r\times1$~dimension
vector. Then $ \sum_{j=1}^{r}\varphi_{j,i}a_{j}$ is
the $i$th entry of the vector $T_{r}a$, and so
\begin{equation}\label{eqn:Tra_multiply}
  T_{r}a=\left[ \sum_{j=1}^{r}\varphi_{j,1}a_{j},\ldots,\sum_{j=1}^{r}\varphi_{j,2n}a_{j} \right]^{T}.
\end{equation}
In our system we have only one nonlinear term that is in the right
boundary condition.  Let $ d_r $ be the vector consisting of the first $ n $ entries of $ T_r a $.  Using the definition of $ F(x) $ from the previous section gives
$$
  F(T_r a) = \left[\begin{array}{c}
             0\\
             F_0(d_r)
             \end{array}\right],  \quad  F_0(d_r) = [0, \ldots, 0, -m_l^{-1} k_3 ( T_r a )_n^3 ]^T,
$$
where $ ( T_r a )_n $ denotes the $ n $th entry of $ T_r a $.  Therefore, we do not need to compute the entire $ 2n \times 1 $ vector $ T_r a $ as in (\ref{eqn:Tra_multiply}); we only need the $ n $th entry.  This gives
$$
\left[F(T_{r}a)\right]_{j}=\begin{cases}
0, & j \neq 2n,\\
-m_l^{-1} k_3 \left( \sum_{j=1}^{r}\varphi_{j,n} a_{j} \right)^{3},  &  j = 2n.
\end{cases}
$$
Therefore, the nonlinear term in the reduced order model can be computed using only low order operations by
\[
  [ S_{r}\,F(T_{r}a) ]_i = -m_l^{-1} k_3 \psi_{i,2n} \, \bigg(\sum_{j=1}^{r} \varphi_{j,n} a_{j} \bigg)^{3},  \quad  i=1,2,\ldots,r,
\]
where $ \psi_{i,2n} $ denotes the $ 2n $th entry of the vector $ \psi_i $.

%------------------------------------------------------------------------------------------------
\section{Numerical Results}
\label{sec:numerical_results}
%------------------------------------------------------------------------------------------------

In this section, we present numerical results concerning the effectiveness of the balanced truncation MOR method applied to the finite difference approximation of the cable-mass problem.  For our experiments, we used 100 finite difference nodes and solved all ordinary differential equations with Matlab's \texttt{ode23s}.  Increasing the number of nodes did not change the results.  We fixed selected system parameters, as shown in Table \ref{table 1}, and tested variations of the remaining parameters to determine when the MOR approach is accurate.
\begin{table}[htb]
\renewcommand{\arraystretch}{1.5}
\centering
\caption{Fixed simulation parameters}
{\begin{tabular}{ccccc}
%  \hline
  % after \\: \hline or \cline{col1-col2} \cline{col3-col4} ...
  $ l $ & $ m_0 $ & $ m_l $ & $ k_3 $ & $ \beta $ \\
  \hline
  1 & 1 & 1.5 & 1 & 1 \\
%  \hline
\end{tabular}}
\label{table 1}
\end{table}

We investigated the following examples:
\begin{description}
\item[Example 1] Kelvin-Voigt damping in the interior $(\gamma>0)$ and
damping in the in the right boundary $(\alpha_{l}>0)$. All other
damping damping parameters are taken to be zero, i.e., $\alpha_{0}=\alpha=0$.
\item[Example 2] Viscous damping in the interior $(\alpha>0)$ and damping
in both boundaries $(\alpha_{0}, \alpha_{l}>0)$. The Kelvin-Voigt
damping parameter $\gamma$ is set to zero. Unlike Examples 1 and 3, the
$C_{0}$-semigroup generated by the linear problem is not analytic
in this case and the PDE is hyperbolic.
\item[Example 3] Viscous damping in the interior $(\alpha>0)$ and Kelvin-Voigt
damping in the interior $(\gamma>0)$. All other damping damping
parameters are taken to be zero, i.e., $\alpha_{0}=\alpha_{l}=0$.
\item[Example 4] Viscous damping in the interior $(\alpha>0)$. All other
damping damping parameters are taken to be zero, i.e., $\gamma=\alpha_{0}=\alpha_{l}=0$.
\item[Example 5] Kelvin-Voigt damping in the interior $(\gamma>0)$. All
other damping damping parameters are taken to be zero, i.e., $\alpha=\alpha_{0}=\alpha_{l}=0$.
\end{description}
In Sections \ref{subsec:linear_theory} and \ref{sec:nonlinear_problem} we proved that the unforced linear and nonlinear systems are exponentially stable for Examples 1--3.  Numerical results (not shown) indicate that the linear problems are also exponentially stable for Examples 4--5.  It can be checked that the damping bilinear form $\sigma_{2}$ is not $H$-elliptic for these last two cases.  Therefore, the exponential stability results in Sections \ref{subsec:linear_theory} and \ref{sec:nonlinear_problem} do not apply to Examples 4--5.  We leave theoretical analysis of these cases to be considered elsewhere;  however, we do investigate the model reduction performance computationally.
%

%------------------------------------------------------------------------------------------------
\subsection{Exponential Stability}
%------------------------------------------------------------------------------------------------

Before we present the model reduction computational results, we briefly present numerical results concerning the linear and nonlinear exponential stability theory.  For the linear problem, we test the exponential stability by analyzing the eigenvalues of the matrix $ A $ in the finite difference model (\ref{eqn:nonlinear_FOM}).  Figure \ref{fig:exp_stab_Ex1_linear} shows the eigenvalues of $ A $ for $\gamma=\alpha_{l}=0.1 $, $k_{0}=k_{l}=1$, and $\alpha_{0}=\alpha=0$ (this is a case of Example 1), and they all have negative real part.  For the nonlinear problem, we consider the solution of the finite difference model (\ref{eqn:nonlinear_FOM}), and compute an approximation to the (continuous) energy function in Theorem \ref{thm:exponential stability of nonlinear system} by using trapezoid rule quadrature on the integrals.   Figure \ref{fig:exp_stab_Ex1_nonlinear} shows the exponential decay of the energy with the same parameters and the initial data $e^{x}\sin(1-x)$ for the position and $ \cos(x) $ for the velocity.
\begin{figure}[htb]
\centering
\subfloat[Linear system]{\includegraphics[width=5cm,height=5cm]{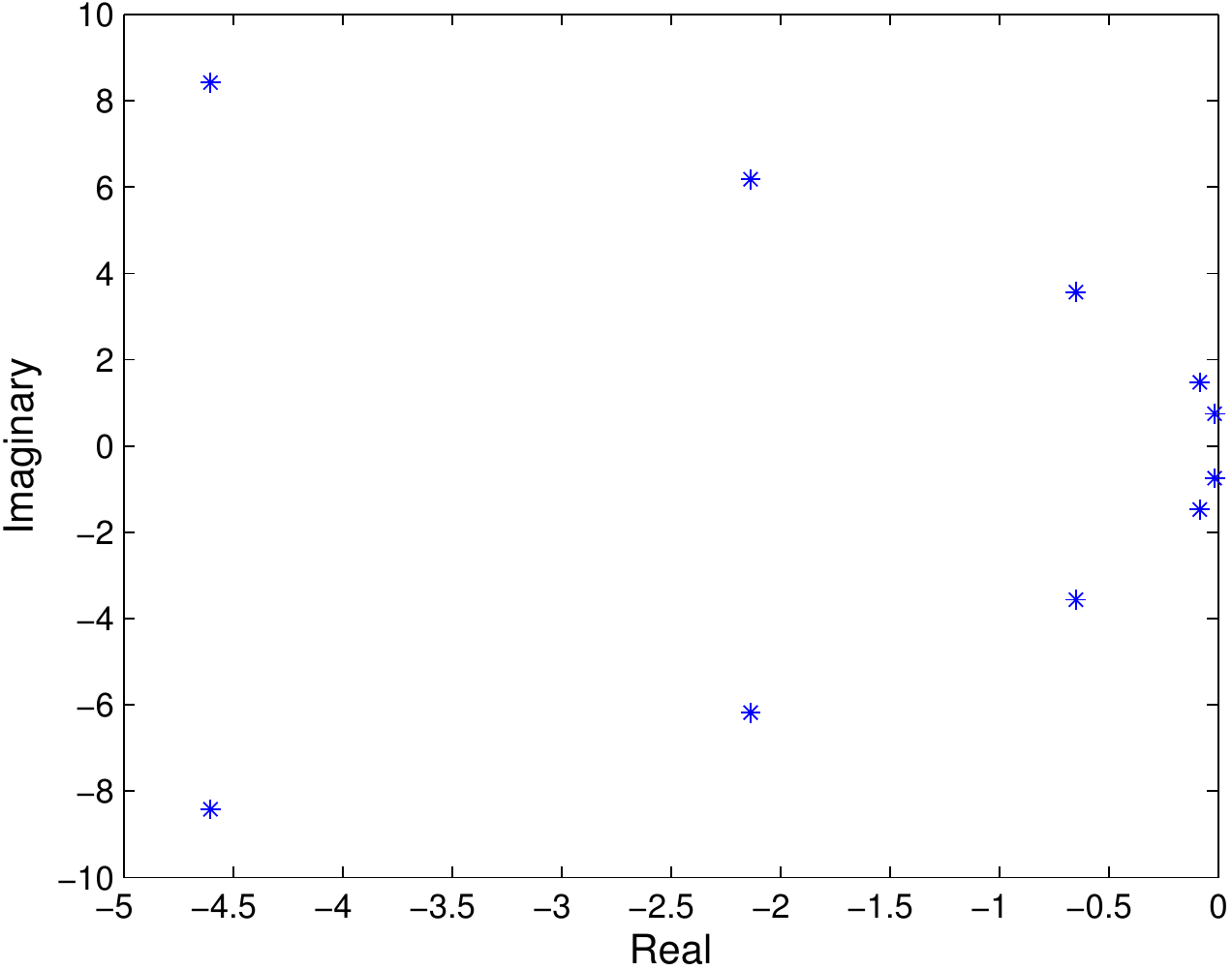}\label{fig:exp_stab_Ex1_linear}}
\subfloat[Nonlinear system]{\includegraphics[width=5cm,height=5cm]{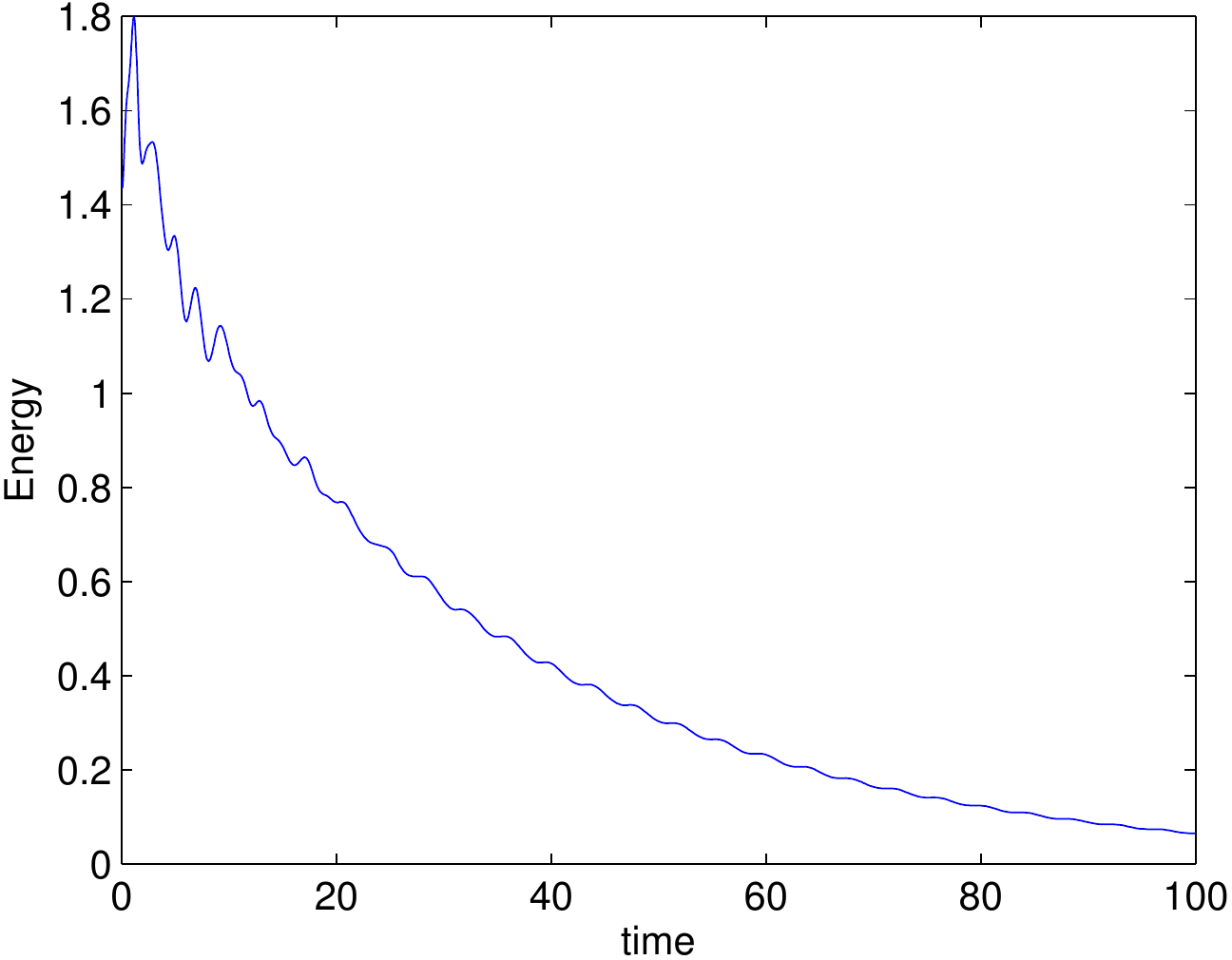}\label{fig:exp_stab_Ex1_nonlinear}}
\caption{Eigenvalues of the linear system and energy decay for the nonlinear system with $\gamma=\alpha_{l}=0.1 $, $k_{0}=k_{l}=1$, and $\alpha_{0}=\alpha=0$}
\end{figure}

We also approximated the eigenvalues and the energy function for the nonlinear problem when $\gamma=0$ (this is a case of Example 2); see Figure \ref{fig:exp_stability2}.  We see the exponential stability in both the linear and nonlinear cases.  The $ C_0 $-semigroup is not analytic in this case, and we see the imaginary part of the eigenvalues increase as is usual with hyperbolic problems.  In the nonlinear case, if $\gamma=0$ and all the other parameters are small (as in the figure) then the energy decays exponentially but also fluctuates rapidly.
\begin{figure}[htb]
\centering
\subfloat[Linear system]{\includegraphics[width=5cm,height=5cm]{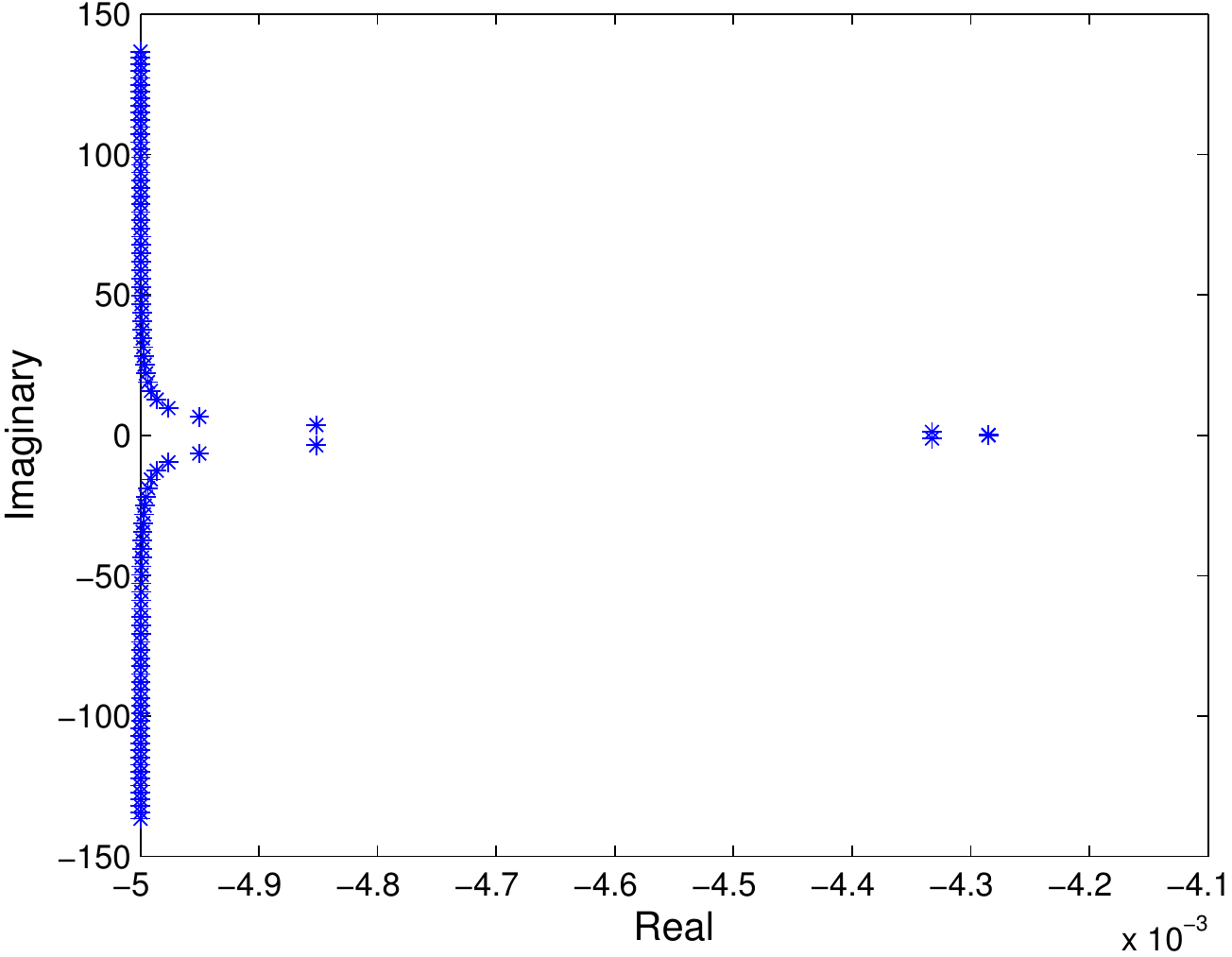}}
\subfloat[Nonlinear system]{\includegraphics[width=5cm,height=5cm]{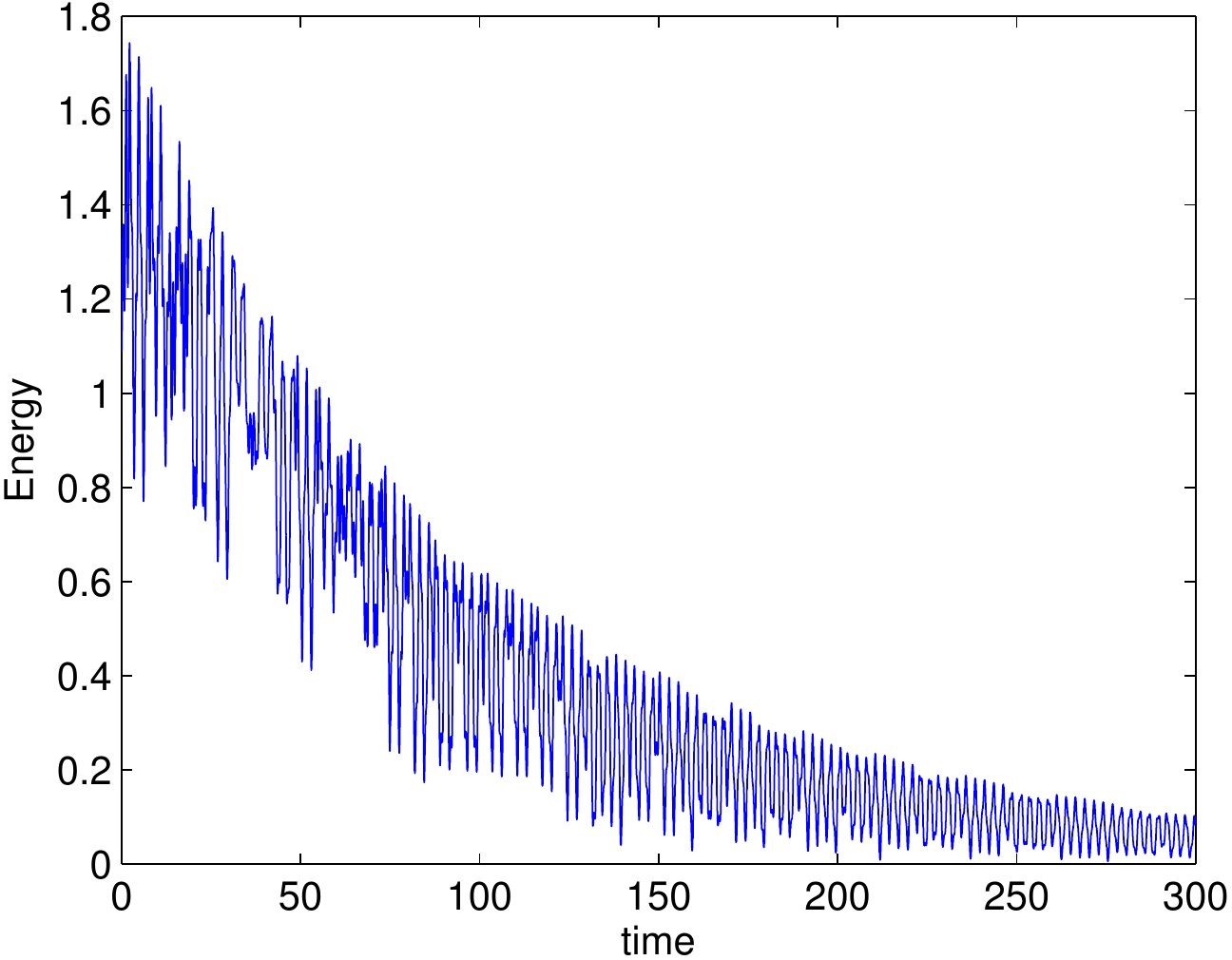}}
\caption{\label{fig:exp_stability2}Eigenvalues of the linear system and energy decay for the nonlinear system with $\gamma=0$ and $\alpha=\alpha_{0}=\alpha_{l}=k_{0}=k_{l}=0.01$}
\end{figure}

%------------------------------------------------------------------------------------------------
\subsection{Model Reduction Results}
%------------------------------------------------------------------------------------------------

Next, we begin the model reduction experiments.  We study the effects of
the various parameters on the accuracy of the model reduction. To
do this, we consider the nonlinear reduced order model (ROM) and full order
model (FOM) with zero initial data and the same input $u(t)$ and
compare the output of the FOM and ROM.  Recall the output $y(t)$ of
the cable-mass system is the position and velocity of the right mass.

Although we focus on the accuracy of the nonlinear ROM, we also present some results for the linear ROM for comparison.  The output of the linear ROM is highly accurate in all cases considered, as expected by balanced truncation theory.

For our experiments, we consider four different oscillating input functions $ u(t) $:
\begin{description}
\item[Input 1] $ u(t) = 0.1 \sin(0.2\pi t) $
\item[Input 2] $u(t)=0.02\cos(at)+0.03\cos(bt)$, where $a$, $b$ are the two largest real parts of the eigenvalues of the matrix $ A $
\item[Input 3] $u(t)=c_{1}\sin(mt)+c_{2}\cos(nt)$, for various constants $ c_1 $, $ c_2 $, $ m $, and $ n $
\item[Input 4] $ u(t) = 0.1 \, \mathrm{square}(0.2\pi t) $
\end{description}
Input 1 was originally considered for this problem in \cite{Shoori14}, where this cable-mass system was considered as a heuristic model for a wave tank with a wave energy converter.  Input 2 was considered for a different cable-mass problem in \cite{burns1998reduced}.  We note that this input causes a type of resonance, i.e., the solution magnitude can initially grow in time before the damping causes the magnitude to return to a moderate level.  We also considered Input 3 to test a variety of oscillating input behaviors.  Finally, we considered Input 4 to see if a discontinuous input\footnote{The square wave is defined by $ \mathrm{square}(t) = 1 $ if $ \sin(t) > 0 $ and $ \mathrm{square}(t) = -1 $ if $ \sin(t) < 0 $.} causes any change in the ROM output.

\textbf{Case: Small damping parameters.}  We first investigate the behavior of the ROM for damping parameters that are small relative to the boundary stiffness parameters.  In this case, for all examples and inputs, the output of the nonlinear ROM is highly accurate compared to the FOM output.  We present results for two specific scenarios.  Figure \ref{fig:small_damp_ex1_in2} shows the output of the FOM and ROM for both the linear and nonlinear systems for Example 1, Input 2 with $\alpha_{0}=\alpha=0$, $\alpha_{l}=k_{0}=k_{l}=0.1$, and small Kelvin-Voigt parameter $\gamma=0.001$.  The agreement is excellent in both the linear and nonlinear cases.
\begin{figure}[htb]
\centering{}\subfloat[Linear system]{\includegraphics[width=6cm,height=6cm]{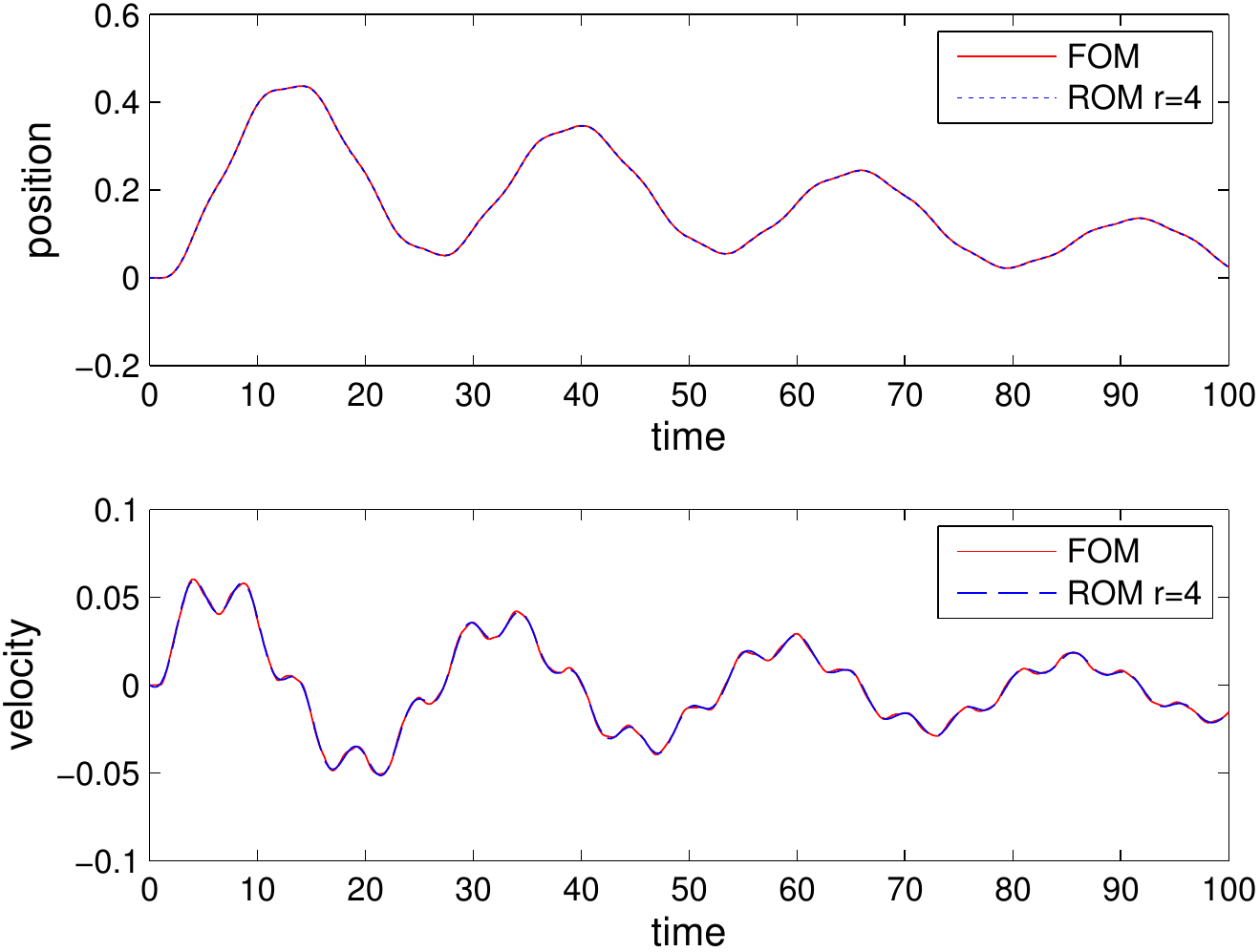}}
\subfloat[Nonlinear system]{\includegraphics[width=6cm,height=6cm]{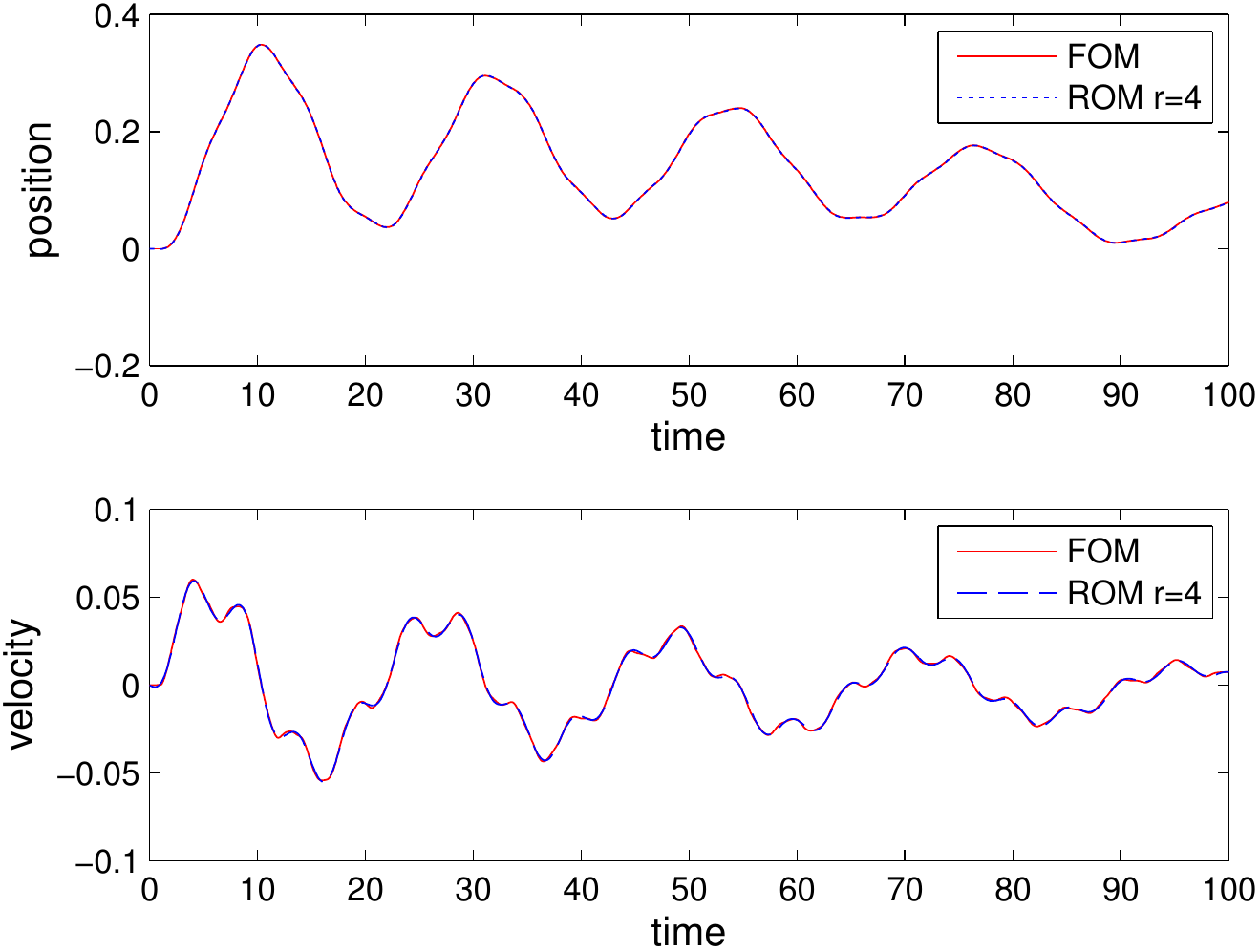}}
\caption{\label{fig:small_damp_ex1_in2}Example 1, Input 2: Output of the ROM and FOM for $\alpha_{0}=\alpha=0$, $\alpha_{l}=k_{0}=k_{l}=0.1$, and $\gamma=0.001$}%, $u(t)=0.02cos(0.0144t)+0.03cos(0.0139t)$ }
\end{figure}

Next we observe the behavior of the ROM and FOM for small damping with discontinuous input. Figure \ref{fig:small_damp_ex5_in4} shows the behavior of the nonlinear FOM and ROM for Example 5, Input 4 (the square wave) with $\alpha=\alpha_{0}=\alpha_{l}=0$, $\gamma=0.001$, and $k_{0}=k_{l}=0.1$.  The nonlinear ROM is highly accurate even though the position and velocity outputs are very irregular.
\begin{figure}[htb]
\begin{centering}
\includegraphics[width=6cm,height=6cm]{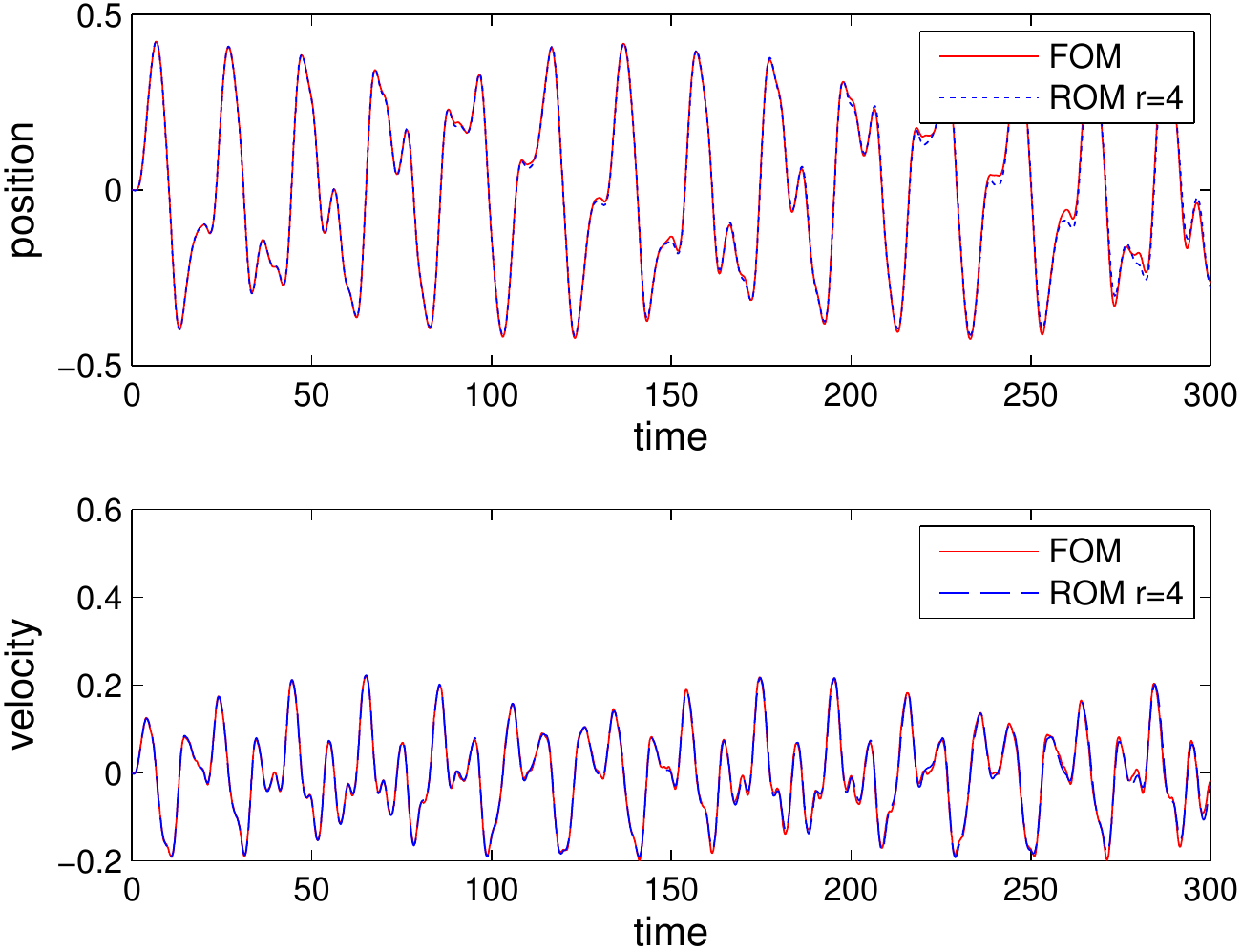}
\caption{\label{fig:small_damp_ex5_in4}Example 5, Input 4: Output of the nonlinear ROM and FOM for $\alpha=\alpha_{0}=\alpha_{l}=0$, $\gamma=0.001$, and $k_{0}=k_{l}=0.1$}% for input $u(t)=0.1square(0.2\pi t)$}
\end{centering}
\end{figure}

Again, we note that the nonlinear ROM is very accurate for all examples and all inputs when the damping parameters are small relative to the boundary stiffness parameters.

\textbf{Case: Small stiffness parameters.}  Next we investigate the behavior of the FOM and ROM when the boundary stiffness parameters are small relative to the damping parameters.  In this case, we notice a difference in the accuracy of the ROM depending on the smoothness of the input.

We being with smooth inputs, i.e., Input 1--3.  Overall, for all examples and all smooth inputs, the nonlinear ROM is highly accurate.  One specific scenario is shown in Figure \ref{fig:small_stiff_ex2_in1}.  Here, the linear and nonlinear FOM and ROM output are shown for Example 2, Input 1 with $\gamma=0$, $\alpha=\alpha_{0}=\alpha_{l}=0.1$, and small stiffness parameters $k_{0}=k_{l}=0.001$.  We note that the linear and nonlinear ROM output are very accurate.
\begin{figure}[htb]
\centering{}\subfloat[Linear system]{\includegraphics[width=6cm,height=6cm]{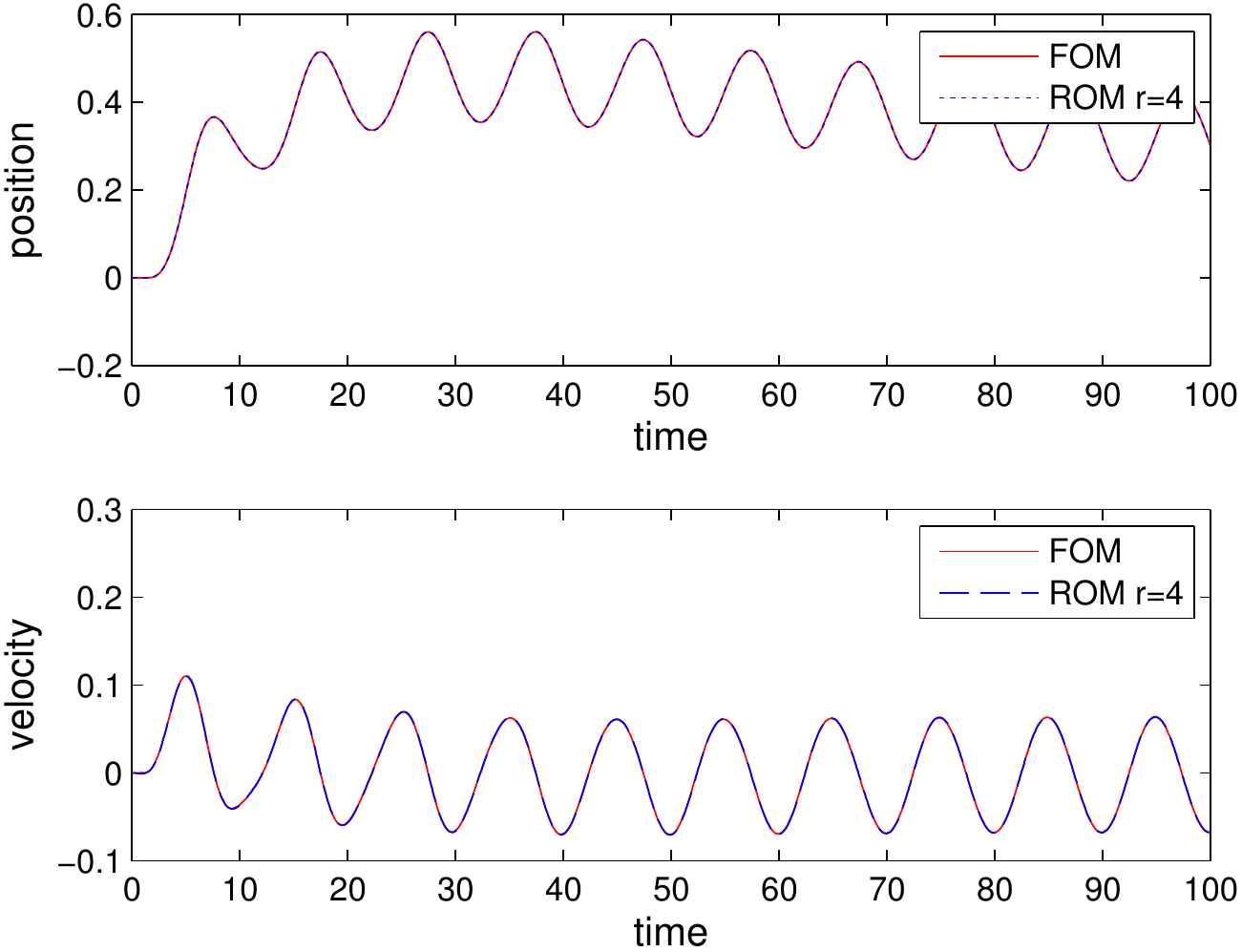}}
\subfloat[Nonlinear system]{\includegraphics[width=6cm,height=6cm]{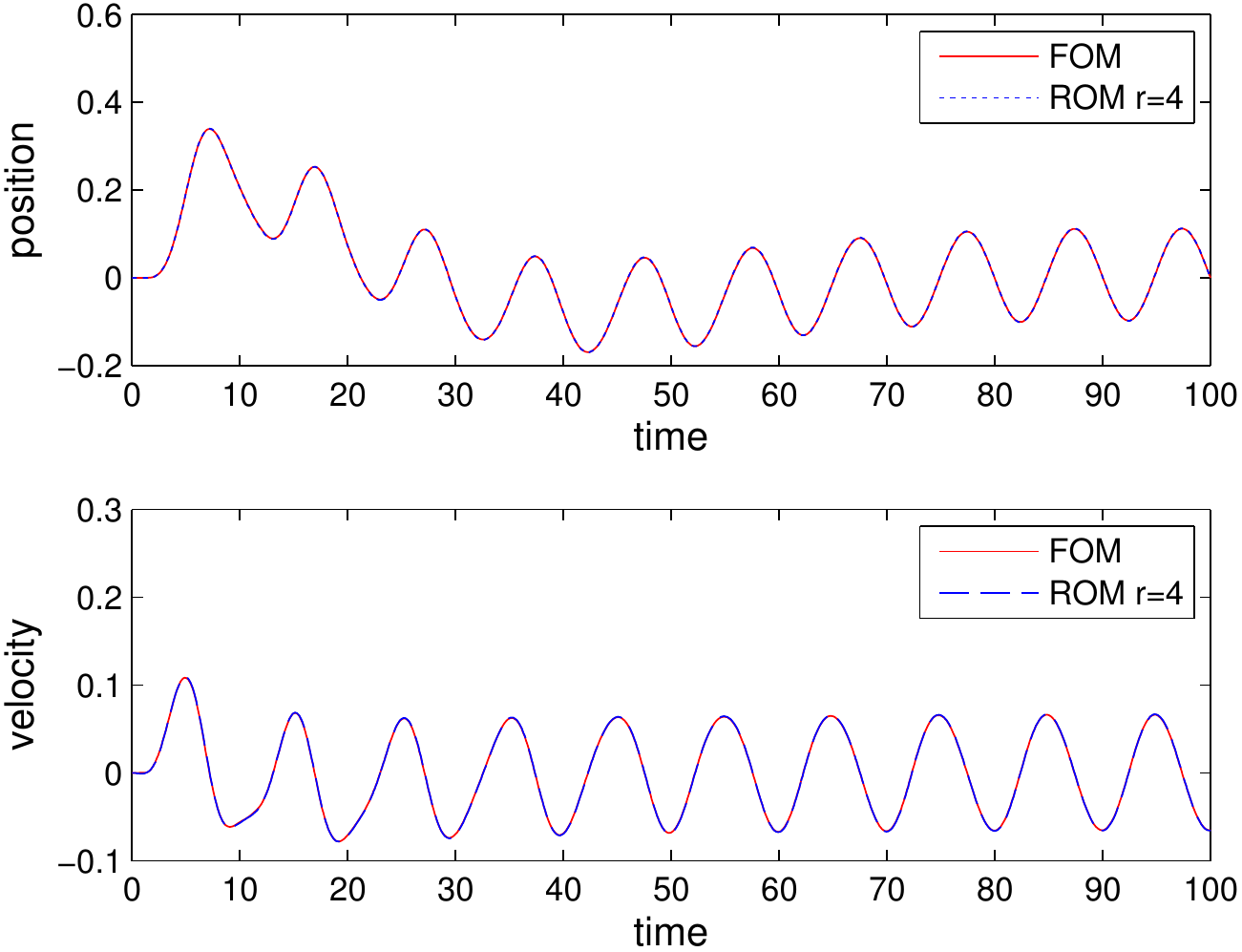}}
\caption{\label{fig:small_stiff_ex2_in1}Example 2, Input 1: Output of the ROM and FOM for $\gamma=0$, $\alpha=\alpha_{0}=\alpha_{l}=0.1$, and $k_{0}=k_{l}=0.001$}%, $u(t)=0.1sin(0.2\pi t)$}
\end{figure}

The overall results change with Input 4, the discontinuous square wave.  We present results for two specific scenarios.  First, Figure \ref{fig:small_stiff_ex1_in4} shows the linear and nonlinear FOM and ROM output for Example 1 with $ \alpha = \alpha_0 = 0 $, $\gamma=\alpha_{l}=0.1$, and small stiffness $k_{0}=k_{l}=0.001$.  The nonlinear ROM for $ r = 4 $ is accurate over an initial time interval, but then shows a slight loss in accuracy. However, increasing $ r $ in the nonlinear ROM does yield high accuracy, even over a long time interval (not shown).  Figure \ref{fig:small_stiff_ex5_in4} shows the FOM and ROM output over the longer time interval $ 0 \leq t \leq 300 $ for another scenario: Example 5, Input 4 with $\gamma=0.1$, $\alpha=\alpha_{0}=\alpha_{l}=0$, and small stiffness $k_{0}=k_{l}=0.001$.  The nonlinear ROM output is again highly accurate for an initial time period, but then suffers a loss of accuracy.  Increasing $ r $ does not improve the accuracy.
\begin{figure}[htb]
\centering{}\subfloat[Linear system]{\includegraphics[width=6cm,height=6cm]{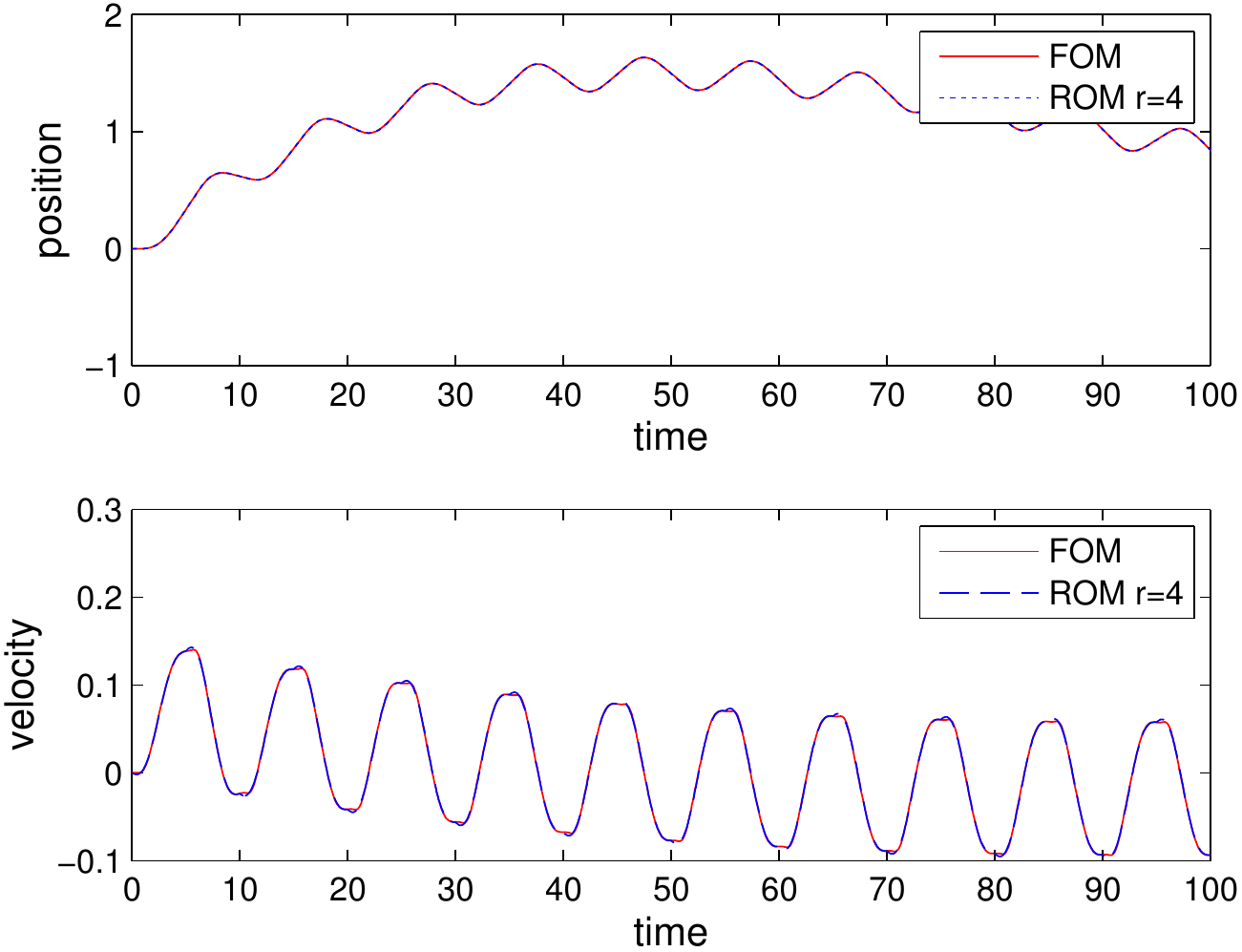}}
\subfloat[Nonlinear system]{\includegraphics[width=6cm,height=6cm]{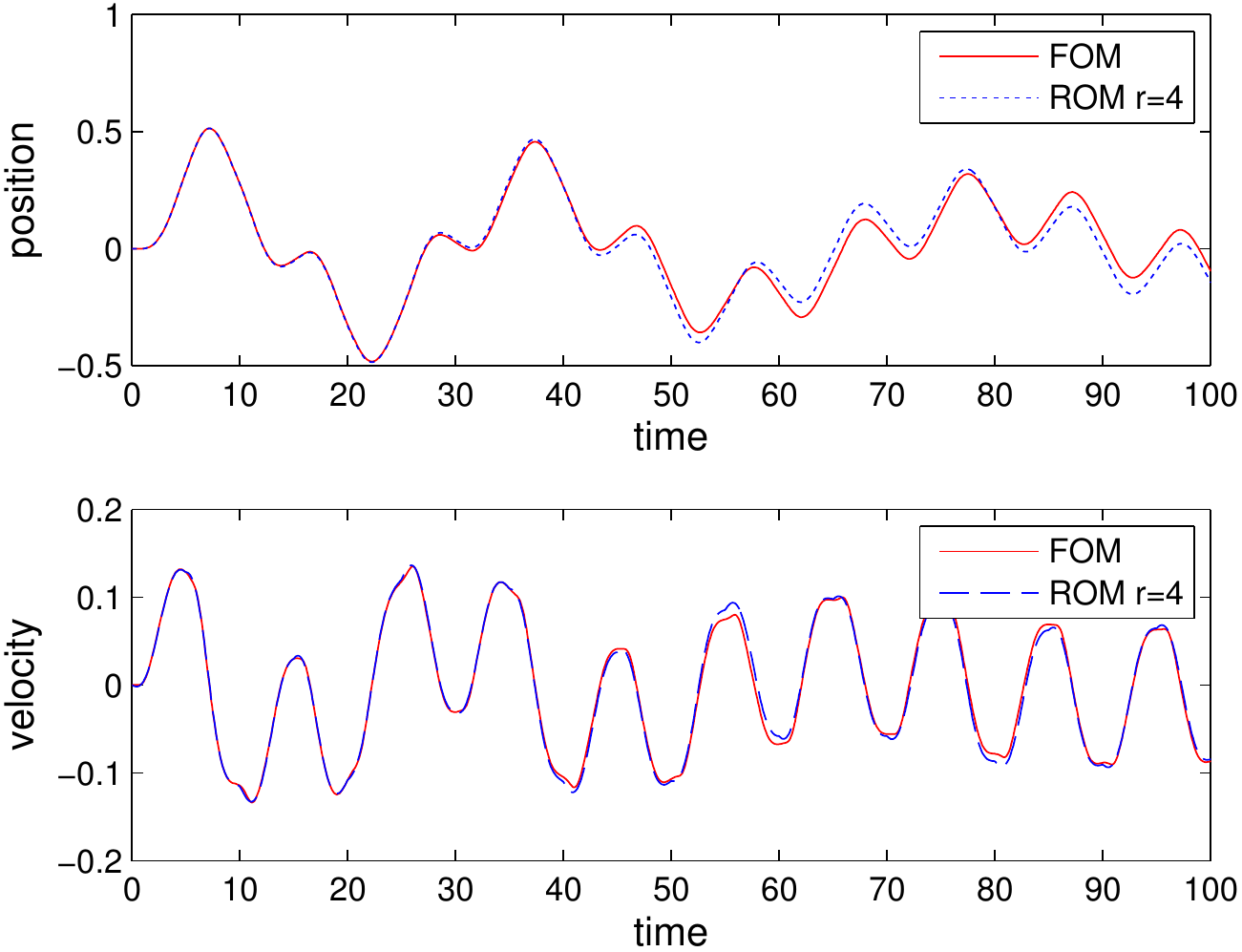}}
\caption{\label{fig:small_stiff_ex1_in4}Example 1, Input 4: Output of the ROM and FOM for $ \alpha = \alpha_0 = 0 $, $\gamma=\alpha_{l}=0.1$, and $k_{0}=k_{l}=0.001$}% $u(t)=0.1square(0.2\pi t)$ }
\end{figure}
\begin{figure}[htb]
\centering{}\includegraphics[width=6cm,height=6cm]{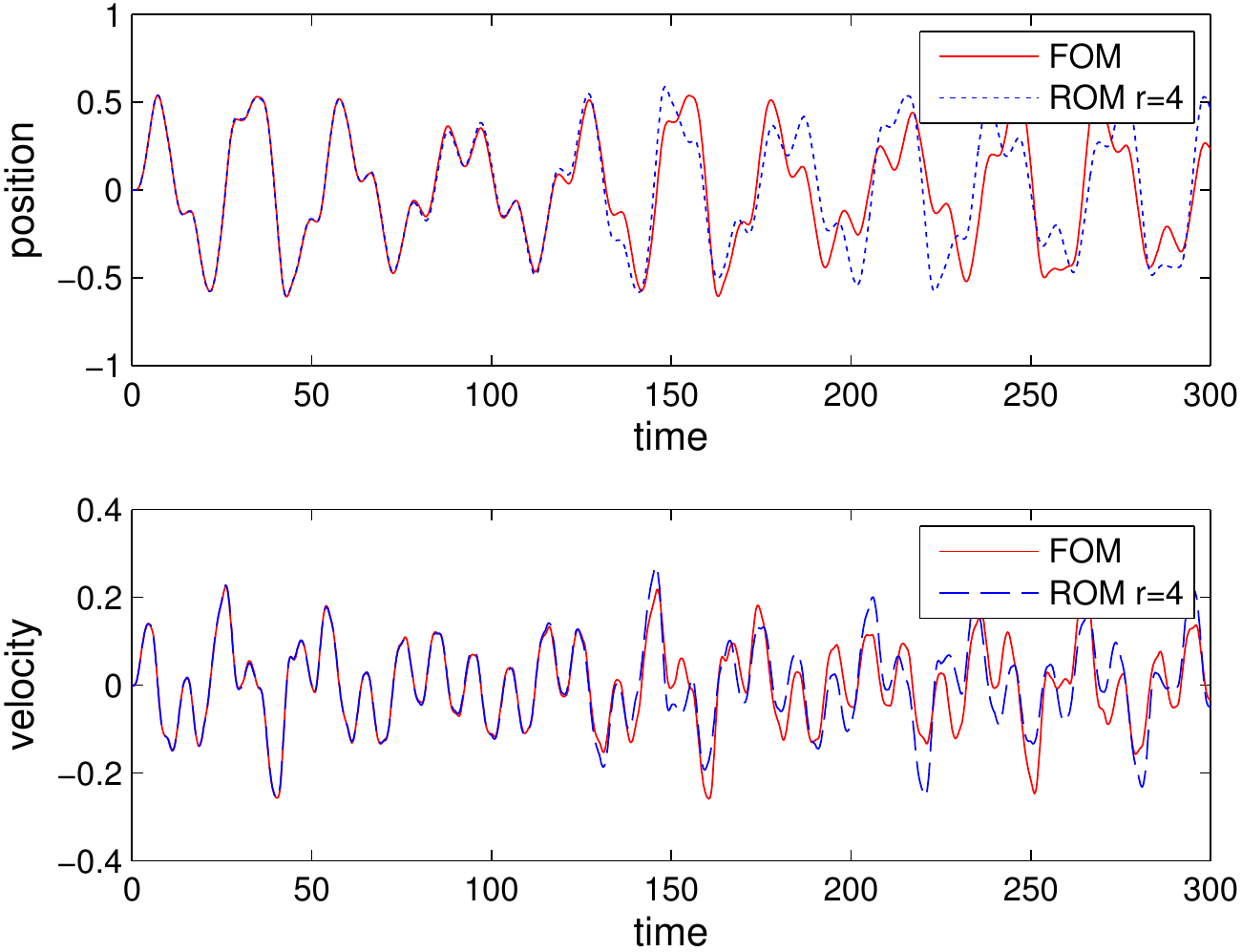}
\caption{\label{fig:small_stiff_ex5_in4}Example 5, Input 4: Output of the ROM and FOM for $\gamma=0.1$, $\alpha=\alpha_{0}=\alpha_{l}=0$, and $k_{0}=k_{l}=0.001$}
\end{figure}

To summarize, when the stiffness parameters are small relative to the damping parameters, the nonlinear ROM is highly accurate for all examples with smooth inputs.  However, for the discontinuous square wave input, the nonlinear ROM is only highly accurate over an initial time period and then accuracy can be lost.  Increasing the order $ r $ of the ROM may not improve the accuracy.  Also, if the magnitude of the input is reduced, then the length of the highly accurate initial time interval does increase (not shown).

\textbf{Case: Small damping and stiffness parameters.}  Finally, we consider the behavior of the nonlinear ROM when the damping and stiffness parameters are small relative to the mass and nonlinear stiffness parameters ($ m_0 = 1 $, $ m_l = 1.5 $, and $ k_3 = 1 $).  In this case, all behaviors are possible: the nonlinear ROM can be highly accurate over a long time interval, or it can lose high accuracy after an initial time period.  A loss of accuracy can occur for any example and for smooth or discontinuous inputs.  We present two specific scenarios.  Figure \ref{fig:small_all_ex3_in2_in4} shows the output of the nonlinear FOM and ROM for Example 3 with $\alpha_{0}=\alpha_{l}=0$, $\alpha=\gamma=k_{0}=k_{l}=0.001$, and two different inputs.  We see high accuracy over a long time interval for Input 2, but a loss of accuracy over a long time interval for Input 4.
\begin{figure}[htb]
\centering{}
\subfloat[Input 2]{\includegraphics[width=6cm,height=6cm]{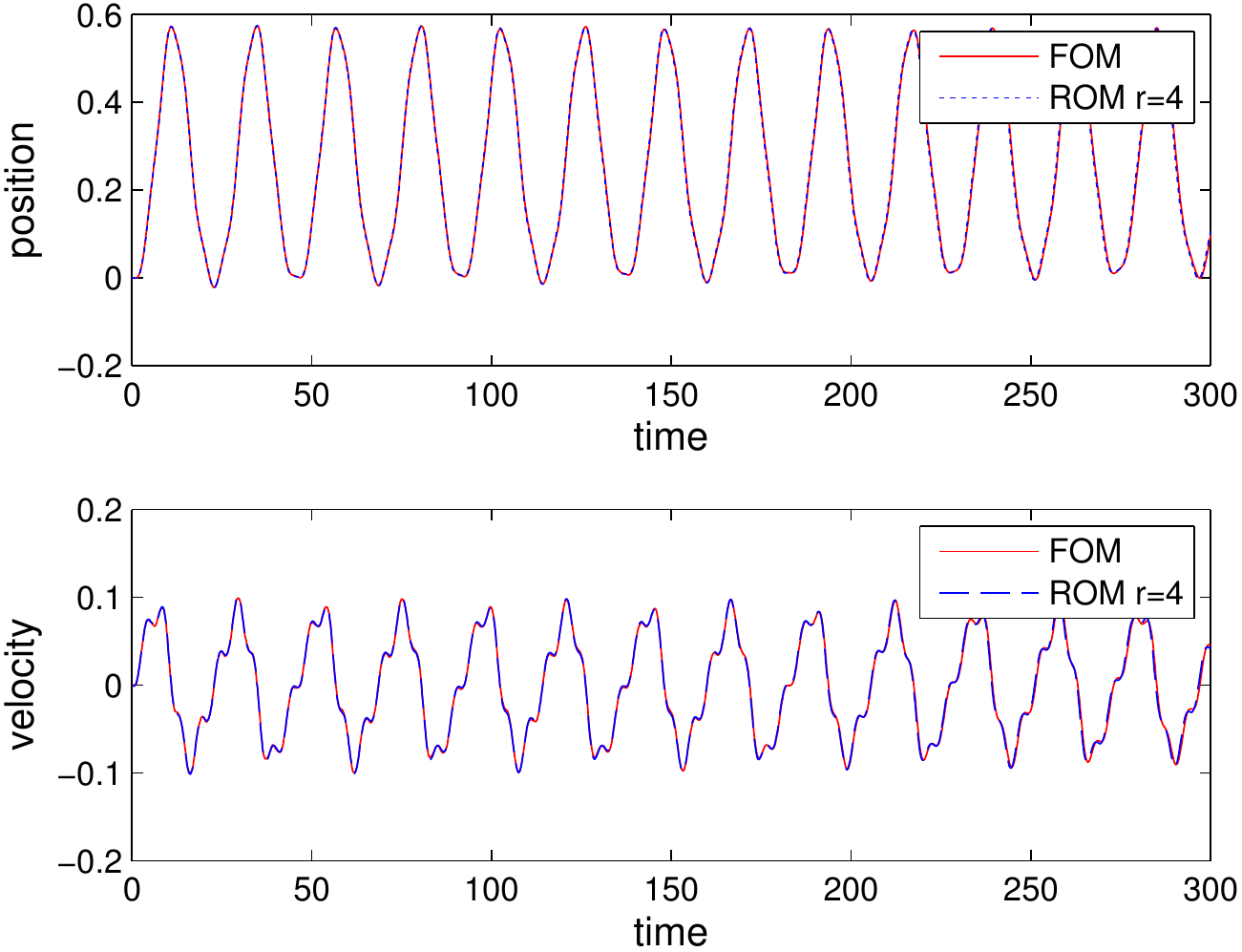}}
\subfloat[Input 4]{\includegraphics[width=6cm,height=6cm]{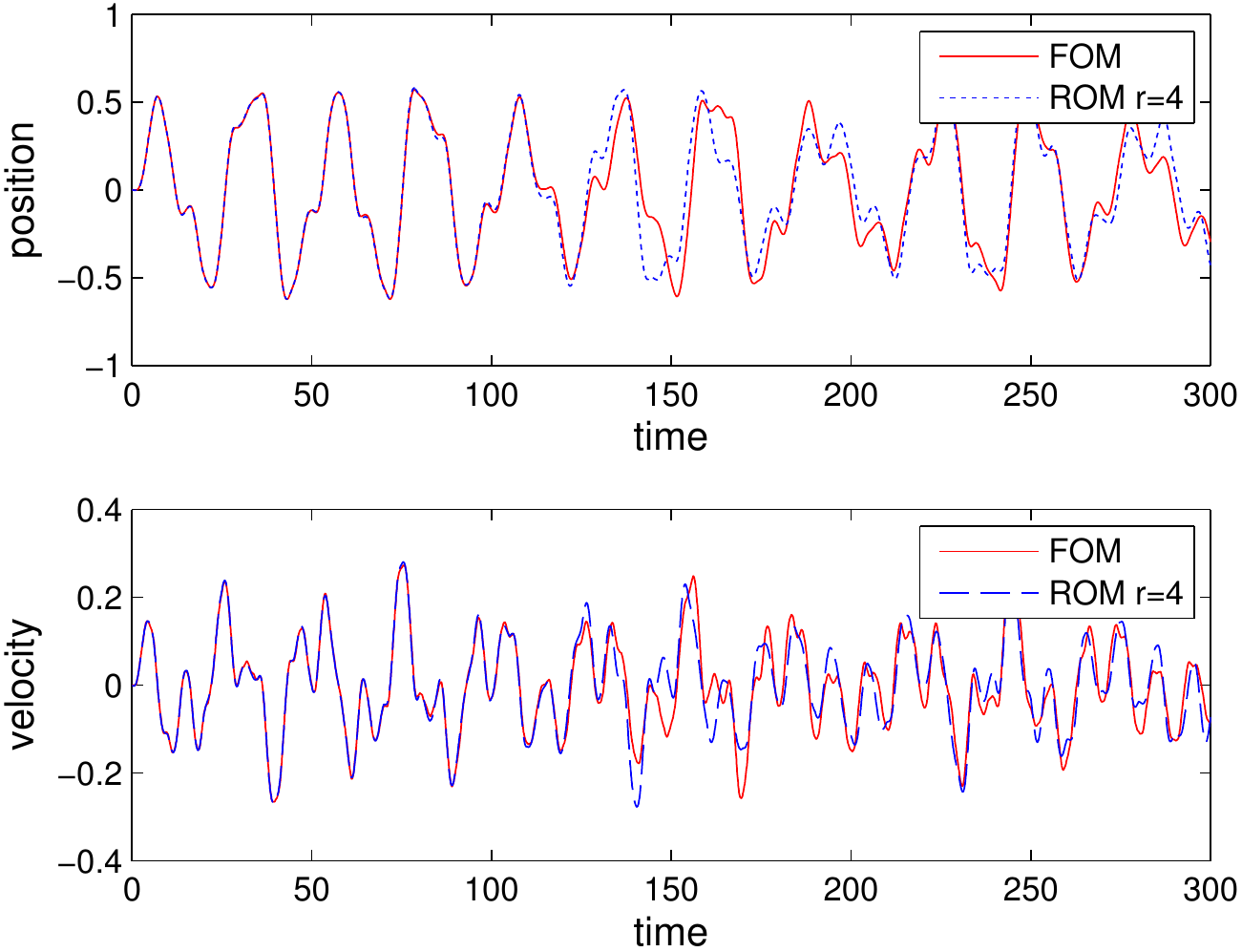}}
\caption{\label{fig:small_all_ex3_in2_in4}Example 3: Output of the nonlinear ROM and FOM for $\alpha_{0}=\alpha_{l}=0$ and $\alpha=\gamma=k_{0}=k_{l}=0.001$}
\end{figure}

We note that we have primarily focused on presenting results for scenarios where some of the system parameters are very small relative to other parameters.  We also tested many other parameters scenarios, and the nonlinear ROM was frequently highly accurate.

%------------------------------------------------------------------------------------------------
\section{Conclusion}
%------------------------------------------------------------------------------------------------

We considered a cable-mass system originally motivated by an application to wave energy that is modeled by a 1D wave equation with linear and nonlinear second order oscillator dynamic boundary conditions.  We proved the well-posedness of the unforced linear and nonlinear problems.  For certain assumptions on the damping parameters, we proved the linear problem is exponentially stable and the energy decays exponentially fast for the nonlinear problem.  This model primarily differs from most models considered in the literature because the dynamic boundary conditions hold on all boundaries.

For the forced input-output nonlinear cable-mass system, we described and numerically investigated a model order reduction (MOR) approach based on balanced truncation.  We found that the nonlinear reduced order model (ROM) was highly accurate for many parameter scenarios, including cases when the PDE model is parabolic or hyperbolic with light damping.  Even for the most challenging parameter cases, the nonlinear ROM was highly accurate for an initial time interval and the accuracy of the ROM increased when the magnitude of the input decreased.

No theory currently exists for this nonlinear MOR approach.  The results in this paper suggest some beginning theoretical results that could be investigated.  Specifically, our numerical results suggest it may be possible to prove that the error in the output for the nonlinear ROM is small over an initial time interval, and that the length of this interval increases as the magnitude of the input decreases.  Given the lack of theoretical results for many types of nonlinear model reduction, even a basic theoretical result such as this would be an advance.

For our linear and nonlinear exponential stability results, we required the damping bilinear form to be $ H $-elliptic.  We gave three examples of models with interior damping where this condition is satisfied.  We numerically considered two other parameter scenarios that appear to give exponential stability: the Kelvin-Voigt interior damping parameter $ \gamma $ is positive only, and the viscous interior damping parameter $ \alpha $ is positive only.  In these two cases the damping bilinear form is not $ H $-elliptic, and so the exponential stability theory in our work does not apply; we leave the theory for these two cases to be considered elsewhere.

%------------------------------------------------------------------------------------------------
\section{Appendix}
%------------------------------------------------------------------------------------------------

\begin{lemma}
The space $V$ with the inner product (\ref{eq:vnorm-1}) is
a real Hilbert space, $V$ is dense in $H$, and $V$ is separable.\end{lemma}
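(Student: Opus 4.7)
The plan is to establish the three claims in the order stated, because completeness and separability will both rely on recognizing $V$ as (the image of) a closed subspace of $H^{1}(0,l)\times\mathbb{R}^{2}$ under an equivalent norm.

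First I would verify that $(\cdot,\cdot)_V$ is an inner product. Bilinearity and symmetry are immediate from the defining integral. For positive definiteness, suppose $(z,z)_V=0$ with $z=[w,w_0,w_l]\in V$. Since $\beta^2,k_0,k_l>0$, this forces $w_0=w_l=0$ and $w_x=0$ a.e., so $w$ is constant. The compatibility condition $w(0)=w_0=0$ then gives $w\equiv 0$, hence $z=0$.

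Next I would prove completeness. The key observation is that the $V$ norm is equivalent to the restriction of the natural $H^{1}(0,l)\times\mathbb{R}^{2}$ norm to the set
\[
\widetilde V = \{[w,w_0,w_l]\in H^1(0,l)\times\mathbb{R}^2 : w(0)=w_0,\ w(l)=w_l\}.
\]
One direction of the equivalence is trivial (since $\beta^2\|w_x\|_{L^2}^2\leq \|z\|_V^2$ and $|w_0|^2,|w_l|^2$ are similarly dominated). The other direction uses Lemma~\ref{lem:lemma to show inequalyties} to bound $\|w\|_{L^2}^2$ by $w_0^2$ and $\|w_x\|_{L^2}^2$, giving $\|z\|_{H^1\times\mathbb R^2}^2 \leq C\|z\|_V^2$. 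Because the trace maps $w\mapsto w(0)$ and $w\mapsto w(l)$ from $H^1(0,l)$ to $\mathbb{R}$ are continuous, $\widetilde V$ is a closed subspace of the Hilbert space $H^1(0,l)\times\mathbb{R}^2$. A closed subspace of a Hilbert space is itself a Hilbert space under any equivalent inner product, so $V$ with $(\cdot,\cdot)_V$ is a real Hilbert space.

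For density, I would construct an explicit approximation. Given $[w,w_0,w_l]\in H$, choose $\phi_n\in C_c^\infty(0,l)$ with $\phi_n\to w$ in $L^2(0,l)$, and build piecewise-linear ``tent'' functions $\eta_n\in H^1(0,l)$ supported in $[0,1/n]\cup[l-1/n,l]$ with $\eta_n(0)=w_0$, $\eta_n(l)=w_l$, linear interpolation to zero on each support interval (for $n$ large). Then $\|\eta_n\|_{L^2}^2 \leq (w_0^2+w_l^2)/(3n)\to 0$. Setting $z^n=[\phi_n+\eta_n,\,w_0,\,w_l]$ gives a sequence in $V$ with $z^n\to[w,w_0,w_l]$ in $H$. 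Finally, separability follows for free: since $V$ is continuously embedded (via an equivalent norm) in the separable metric space $H^1(0,l)\times\mathbb{R}^2$, every subset of a separable metric space is separable.

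The only mildly delicate step is the density argument, where care must be taken to ensure the boundary-correction terms $\eta_n$ preserve the compatibility condition while vanishing in $L^2$; the rest is a direct unpacking of the definitions combined with the trace theorem on $H^1(0,l)$.
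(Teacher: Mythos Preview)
Your proof is correct, but it differs from the paper's in several places.

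For completeness, the paper argues directly with Cauchy sequences: given $\{z^n\}$ Cauchy in $V$, it observes that $[w_x^n,w_0^n,w_l^n]$ is Cauchy in $L^2(0,l)\times\mathbb{R}^2$, extracts a limit $[q,w_0,w_l]$, and reconstructs $w(x)=w_0+\int_0^x q$. Your route---showing the $V$ norm is equivalent to the $H^1(0,l)\times\mathbb{R}^2$ norm on the trace-constrained subspace and then invoking closedness via continuity of the trace---is more conceptual and transfers readily to higher-dimensional analogues, at the cost of importing the trace theorem. For density, both arguments split an arbitrary $H$-element into a boundary-matching piece plus something approximable from $H_0^1$; the paper uses a single fixed linear interpolant $g(x)=w_0+l^{-1}(w_l-w_0)x$ together with $H_0^1$ dense in $L^2$, while you use shrinking tent functions $\eta_n$ together with $C_c^\infty$ dense in $L^2$. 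These are equivalent in difficulty; the paper's choice avoids the (minor) need to check the supports of $\eta_n$ do not overlap. For separability, the paper constructs an explicit countable set $\{[x,0,l],[l-x,l,0]\}\cup\{[\sin(n\pi l^{-1}x),0,0]\}$ and shows its orthogonal complement in $V$ is trivial; your one-line appeal to the fact that every subset of a separable metric space is separable is much quicker, though less self-contained. One small expository point: your description of the ``two directions'' of the norm equivalence is slightly tangled (the parenthetical and the following sentence both end up arguing the same inequality $\|z\|_{H^1\times\mathbb{R}^2}\le C\|z\|_V$), but the mathematics is fine since the reverse bound is immediate.
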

\begin{proof}
First, if $(z,z)_{V}=0$, where $z=[w,w_{0},w_{l}]$ , then $w(x)$
is a constant function and $w_{0}=w_{l}=0.$ The compatibility condition $ w(0) = w_0 $, $ w(l) = w_l $ (\ref{eqn:displacement_cc})
implies $w(x)=0$ for all $x$, and so $z=0$. It is straightforward
to show that $(\cdot,\cdot)_{V}$ satisfies the remaining properties of
an inner product.

Next, let $\left\{ z^{n}\right\} \subset V$ be a Cauchy sequence,
where $z^{n}=[w^{n},w_{0}^{n},w_{l}^{n}]$. Therefore, $[w_{x}^{n},w_{0}^{n},w_{l}^{n}]$
is a Cauchy sequence in $L^{2}(0,l)\times\mathbb{R}^{2},$ and so
there exists $[q,w_{0},w_{l}]\in L^{2}(0,l)\times\mathbb{R}^{2}$
such that
\[
w_{x}^{n}\rightarrow q\,\,\text{in}\,\,L^{2}(0,l),\,\,\,\,\,\,w_{0}^{n}\rightarrow w_{0},\,\,\,\,\,\,w_{l}^{n}\rightarrow w_{l}.
\]
Define $w$ by $w(x)=w_{0}+\int_{0}^{x}q(\eta)d\eta$. Then $w\in H^{1}(0,l),\,w_{x}=q,$
and $w(0)=w_{0}.$ Also, $w(l)=w_{l}$ since
\[
w(l)=\lim_{n\to\infty} w_{0}^{n}+\int_{0}^{l}w_{x}^{n}(\eta) \, d\eta=\lim_{n\to\infty}w_{l}^{n}=w_{l}.
\]
Therefore $z=[w,w_{0},w_{l}]$ satisfies the displacement compatibility
condition and $z^{n}$ converges in $V$ to $z\in V$. This shows
$V$ is a Hilbert space.

To show $V$ is dense in $H$, let $z=[w,w_{0},w_{l}]\in H$ and
define
\[
g(x)=w_{0}+l^{-1}(w_{l}-w_{0})x.
\]
Note that $g(0)=w_{0}$ and $g(l)=w_{l}$. Since $H_{0}^{1}(0,l)$
is dense in $L^{2}(0,l)$, there exists a sequence $q_{n}\in H_{0}^{1}(0,l)$
such that $q_{n}\rightarrow w-g$ in $L^{2}$. Define
\[
z_{n}=[q_{n}+g, w_{0}, w_{l}].
\]
Due to the properties of $q_{n}$ and $g$, we have $z_{n}\in V$
for all $n$ and also $z_{n}\rightarrow z$ in $H$ as $n\rightarrow\infty$.
This proves $V$ is dense in $H$.

To show $V$ is separable, let $z_{n}=[\sin(n\pi l^{-1}x),0,0]\in V$
and define $M\subset V$ by
\[
M=\{[x,0,l],[l-x,l,0]\}\cup\{z_{n}\}_{n=1}^{\infty}.
\]
Let $z=[w,w_{0},w_{l}]\in M^{\perp}$. We show $z=0$ so that the
span of $M$ is dense in $V$, and therefore $V$ is separable . First, the equations
\[
(z,[x,0,l])_{V}=0,  \quad  \left(z,[l-x,l,0]\right)_{V}=0
\]
imply
\[
\beta^{2}(w_{l}-w_{0})+k_{l}lw_{l}=0,  \quad  -\beta^{2}(w_{l}-w_{0})+k_{0}lw_{0}=0.
\]
It can be checked that the only solution of these questions is $w_{0}=w_{l}=0$,
and therefore $z=[w,0,0]$. The compatibility condition also gives
$w(0)=w(l)=0$, i.e., $w\in H_{0}^{1}(0,l)$. Finally, $\left\{ \sin(n\pi l^{-1}x)\right\} _{n=1}^{\infty}$ is
an orthogonal basis for $H_{0}^{1}(0,l)$, and therefore $(z,z_{n})_{V}=0$
for all $n$ implies $z=0.$ This proves $V$ is separable.\end{proof}

%%%%%%%%%%%%%%%%%%%%%%%%%%%%%%%%%%%%%%%%%%%%%%%%%%%%%%%%%%%%%%%%%%%%%%%%%%%%%%%%

\bibliographystyle{siamplain}
\bibliography{citations_for_conference_paper}

\end{document}